\numberwithin{equation}{section}
\newtheorem{lem}[equation]{Lemma}
\newtheorem{thm}[equation]{Theorem}
\newtheorem{claim}{Claim}
\newtheorem{conj}[equation]{Conjecture}
\theoremstyle{definition}
\newtheorem{defn}[equation]{Definition}
\theoremstyle{remark}
\newtheorem{obs}[equation]{Observation}
\newtheorem*{ack}{Acknowledgments}
\Crefname{lem}{Lemma}{Lemmas}
\crefname{lem}{lemma}{lemmas}
\Crefname{thm}{Theorem}{Theorems}
\crefname{thm}{theorem}{theorems}
\newcommand{\pref}[1]{\textup(\ref{#1}\textup)}
\newcommand{\fullcref}[2]{\cref{#1}\pref{#1-#2}}
\newcounter{case}
\newenvironment{case}[1][\unskip]{\refstepcounter{case}\bf
\medskip \noindent Case \thecase\ #1.\ \it}{\unskip\upshape}
\numberwithin{case}{equation}
\renewcommand{\thecase}{\arabic{case}}
\Crefname{case}{Case}{Cases}
\crefname{case}{case}{cases}
\newcommand{\NN}{\mathbb{N}}
\DeclareMathOperator{\Aut}{Aut}
\DeclareMathOperator{\SymGrp}{S}
\DeclareMathOperator{\AltGrp}{A}
\DeclareMathOperator{\Perm}{Perm}
\DeclareMathOperator{\Exp}{Exp}
\title[Exponential graph growth]{On transitive permutation groups with exponential graph growth}
\author{\DJ or\dj e Mitrovi\'c}
\address{Department of Mathematics, University of Auckland}
\email{dmit755@aucklanduni.ac.nz}
\author{Gabriel Verret}
\address{Department of Mathematics, University of Auckland}
\email{g.verret@auckland.ac.nz}
\keywords{Arc-transitive graph, local action, graph growth, transitive permutation group}
\subjclass{Primary 20B25; Secondary 05E18.}
\begin{document}

\begin{abstract}
    Let $\Gamma$ be a finite connected graph and $G$ a vertex-transitive group of its automorphisms. The pair $(\Gamma, G)$ is said to be locally-$L$ if the permutation group induced by the action of the vertex-stabiliser $G_v$ on the set of neighbours of a vertex $v$ in $\Gamma$ is permutation isomorphic to $L$. The maximum growth of $|G_v|$ as a function of $|V\Gamma|$ for locally-$L$ pairs $(\Gamma,G)$ is called the graph growth of $L$. We prove that if $L$ is a transitive permutation group on a set $\Omega$ admitting a nontrivial block $B$ such that the pointwise stabiliser of $\Omega\setminus B$ in $L$ is nontrivial, then the graph growth of $L$ is exponential. This generalises several results in the literature on transitive permutation groups with exponential graph growth.
\end{abstract}

\maketitle

\section{Introduction}

All groups considered in this paper are finite, and all graphs are finite and simple (undirected, with no loops or multiple edges). An \emph{arc} of a graph $\Gamma$ is an ordered pair of adjacent vertices. We use $V\Gamma$ and ${A\Gamma}$  to denote the set of vertices and arcs of $\Gamma$, respectively.  The \emph{automorphism group} $\Aut(\Gamma)$ of $\Gamma$ is the group of all permutations of $V\Gamma$ preserving $A\Gamma$. For $G\leq \Aut(\Gamma)$, $\Gamma$ is said to be \emph{$G$-vertex-transitive} and \emph{$G$-arc-transitive} if $G$ is transitive on $V\Gamma$ and $A\Gamma$, respectively. We let $G_v$ denote the stabiliser of a vertex $v$ in $G$.

Tutte showed in \cite{Tutte1947_FamilyOfCubicGraphs,Tutte1959_SymmetryOfCubicGraphs} that if $\Gamma$ is a connected $G$-arc-transitive $3$-valent graph and $v\in V\Gamma$, then $|G_v|\leq 48$. This is one of the most important results in algebraic graph theory, with many different applications. For example, in \cite{ConderDobcsanyi786}, the authors used this to compile a complete list of $3$-valent arc-transitive graphs on at most $768$ vertices, a census that has subsequently been extended to include graphs with up to $10000$ vertices \cite{Conder10000}. Tutte's theorem was also used in asymptotic enumeration of  $3$-valent arc-transitive graphs {\cite{AsymptoticEnumerationFixedValency}}.

Given the importance of Tutte's theorem, it is natural to seek to generalise it. To discuss these generalisations, we must first introduce the notion of local action. Let $\Gamma$ be a graph, let $G\leq\Aut(\Gamma)$ and let $v\in V\Gamma$. Note that $G_v$ acts  on the neighbourhood $\Gamma(v)$ of $v$  inducing a permutation group which we denote by $G_v^{\Gamma(v)}$ and call the \emph{local action} of $G$ at $v$. If $\Gamma$ is connected and $G$-vertex-transitive and $G_v^{\Gamma(v)}$ is permutation isomorphic to $L$, then $(\Gamma,G)$ is said to be a \emph{locally-$L$} pair. Note that this is independent of the choice of $v$. Note also that $\Gamma$ is $G$-arc-transitive if and only if $L$ is transitive.

A permutation group $L$ for which there exists a constant $c(L)$ such that $|G_v|\leq c(L)$ for all locally-$L$ pairs $(\Gamma,G)$ is said to be \textit{graph-restrictive}. With this terminology, Tutte's theorem can be rephrased as saying that the transitive permutation groups of degree $3$ are graph-restrictive, and in fact one can take $c(\AltGrp_3)=3$ and $c(\SymGrp_3)=48$. Quite a lot of work has been done over the years on graph-restrictive groups \cite{Gardiner1973,LukeMorgan-Frobenius,MorganGuidici-LocallySemiPrimGraphs,GiudiciMorgan-TheoryOfSemiPrimitiveGroups, Morgan_SnOnOrderedPairs, PSV-GraphRestrictive, Spiga-TwistedWreathType,Spiga-AffineType,IntransitiveGraphRes,OnTheWeissConjectureI_Trofimov,TrofimovWeiss-LocallyLinearGraphs,TrofimovWeiss-E6q, VerretPSubRegular,VerretWeaklyPSubRegular}. For an overview of some of these results see the following surveys \cite{PSV-GraphRestrictive,SpigaOverview2025}. These results, much like Tutte's theorem, lead to many applications \cite{Darius-DensityOfQuotientOrders,Fixity_FlorianPrimozPablo,2ATofOrderkpn,4val2ATPrimozPotocnik}.

Recently, there has been a growing interest in local actions that are not graph-restrictive. The maximum growth of the order of  vertex-stabilisers as a function of $|V\Gamma|$ for locally-$L$ pairs $(\Gamma,G)$ is called the \textit{graph growth} of $L$. (This notion was introduced in \cite{PSV-OrderOfArcStabilizer} under the name graph-type.) Using this language, graph-restrictive permutation groups are precisely those with constant graph growth. Many of the previously described applications to groups with constant graph growth can be extended to groups with ``slow'' graph growth, for example groups with polynomial graph growth. Our focus in this paper is  permutation groups to which these methods do not apply.

\begin{defn}[\cite{PSV-OrderOfArcStabilizer}]\label{ExpGraphGrowthDefn}
    A permutation group $L$ is said to have \textit{exponential graph growth} if there exists a family of locally-$L$ pairs $\{(\Gamma_n,G_n)\}_{n\in\NN}$ such that $\lim_{n\rightarrow \infty}|V\Gamma_n|=\infty$, and a constant $c>1$ satisfying \[|(G_n)_v|\geq c^{|V\Gamma_n|},\forall v \in V\Gamma_n, \forall n\in \NN.\]
\end{defn}

The main contribution of this paper is to describe a large new  family of transitive permutation groups with exponential graph growth. Given a permutation group $L$ on a set $\Omega$ and a subset $S\subseteq \Omega$,  we denote the point-wise stabiliser of $S$ in $L$ by $L_{(S)}$.

\begin{thm}\label{MainThm}
    Let $L$ be a transitive permutation group on a set $\Omega$ admitting a nontrivial block $B$. If $L_{(\Omega \setminus B)}\neq 1$, then $L$ has exponential graph growth.
\end{thm}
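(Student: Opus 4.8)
The plan is to construct, for each $n$, an explicit locally-$L$ pair $(\Gamma_n,G_n)$ whose vertex-stabiliser contains a large direct product of independent copies of $K:=L_{(\Omega\setminus B)}$, thereby forcing exponential growth. The guiding principle is that an element of $K$ fixes every neighbour of a vertex lying outside the block $B$, so if the members of $B$ can be arranged to be \emph{twins} (vertices sharing a common neighbourhood), then $K$ will act by permuting these twins through an automorphism of bounded support; collecting one such automorphism per ``fibre'' yields a stabiliser of size $|K|^{\Theta(|V\Gamma_n|)}$.

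Concretely, let $b=|B|$, let $B=B_1,\dots,B_k$ be the blocks in the block system containing $B$, and let $\bar L$ denote the transitive group of degree $k$ induced by $L$ on this block system. The first step is the elementary observation that $K_i:=L_{(\Omega\setminus B_i)}$ is an $L$-conjugate of $K$ for each $i$ (conjugate by any $g\in L$ with $B_1^g=B_i$, using $K^g=L_{(\Omega\setminus B_1^g)}=K_i$), and that the $K_i$ have pairwise disjoint supports; hence $\prod_{i=1}^k K_i\le L$, which is the source of the independent moves. Next I would take a family of locally-$\bar L$ pairs $(\Sigma_n,H_n)$ with $|V\Sigma_n|\to\infty$ (such a family exists for any transitive $\bar L$, for instance by passing to covers of a single locally-$\bar L$ graph), and form the blow-up $\Gamma_n=\Sigma_n[\overline{K_b}]$ obtained by replacing each vertex of $\Sigma_n$ by $b$ mutually non-adjacent copies and joining all copies across each edge of $\Sigma_n$. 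The fibres of $\Gamma_n\to\Sigma_n$ are then twin-classes of size $b$, the valency of $\Gamma_n$ is $bk=|\Omega|$, and $|V\Gamma_n|=b\,|V\Sigma_n|$.

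For the counting step, fix a vertex $v$ and, for each fibre $F$, let $K_F\le\SymGrp(F)$ be the copy of $K$ acting on $F$ through a fixed identification $F\cong B$. Each $K_F$ acts on $\Gamma_n$ by permuting twins, so $K_F\le\Aut(\Gamma_n)$, these subgroups commute, and every $K_F$ with $v\notin F$ fixes $v$. Thus $\prod_{v\notin F}K_F$ is a subgroup of $\Aut(\Gamma_n)_v$ of order $|K|^{|V\Sigma_n|-1}$. Provided this product lies in $G_n$, we obtain $|(G_n)_v|\ge |K|^{|V\Sigma_n|-1}=|K|^{|V\Gamma_n|/b-1}\ge c^{|V\Gamma_n|}$ for a suitable constant $c>1$ (any $1<c<|K|^{1/b}$ works for large $n$), which is exactly the inequality required in \cref{ExpGraphGrowthDefn}.

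The main obstacle, and the step requiring real care, is to choose $G_n\le\Aut(\Gamma_n)$ so that the pair is \emph{exactly} locally-$L$ while still containing all the $K_F$. On one hand, using the full twin-symmetry $\SymGrp(b)$ at each fibre together with the obvious lift of $H_n$ produces the local action $\SymGrp(b)\wr\bar L$, which overshoots $L$; on the other hand, combining the independent moves $\prod_iK_i$ with a ``straight'' lift of $\bar L$ realises only the subgroup $\langle\prod_iK_i,\ \text{lifts}\rangle$, which undershoots $L$ whenever the block-kernel $\{g\in L: B_i^g=B_i\ \forall i\}$ is strictly larger than $\prod_iK_i$ (such $L$ do occur). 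The resolution is to install on $\Gamma_n$ a coherent, $L$-compatible gluing: the lift of each element of $H_n$ must be twisted within the fibres by within-block data drawn from $L$, subject to a cocycle condition guaranteeing both that $G_n$ is a group and that the induced local group is precisely $L$ at every vertex. Making these twists globally consistent is where the choice of the base family $(\Sigma_n,H_n)$ matters, and I expect to arrange it by modelling the gluing on the universal group of $L$ (in the sense of Burger--Mozes), choosing $\Sigma_n$ with sufficiently large girth so that the local $L$-structure can be propagated without collision. Once such $G_n$ is produced, vertex-transitivity is inherited from that of $(\Sigma_n,H_n)$ together with the fibre symmetry, and the growth estimate above completes the proof.
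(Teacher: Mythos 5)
Your architecture coincides with the paper's: your blow-up $\Sigma_n[\overline{K_b}]$ is precisely the paper's $\Lambda=\Gamma[\overline{K_k}]$ from Phase II, your planted product $\prod_F K_F$ is the paper's group $M$ of Phase VI, and your count $|K|^{|V\Sigma_n|-1}$ is exactly the bound of Claim~\ref{AllAboutH}. The gap is that the step you defer --- building $G_n\leq\Aut(\Gamma_n)$ that contains the planted copies while being \emph{exactly} locally-$L$ --- is the actual content of the theorem, and your proposed mechanism (a Burger--Mozes-style universal group over a large-girth base, with a cocycle condition you ``expect to arrange'') is not carried out. It is also not a routine verification: universal groups are a tree construction, and transplanting them to a finite base requires a legal colouring plus proofs of vertex-transitivity and of exactness of the local action, none of which large girth delivers by itself. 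The paper sidesteps the consistency problem entirely by choosing a very special base: covers of $K_{n,n}$ (\cref{Gamma}) equipped with a $G$-invariant partition $\mathcal{P}$ into $2n$ parts such that each vertex has exactly one neighbour in every part on the other side. This makes the within-block twists \emph{part-wise constant} --- the automorphisms $\chi(a_i,X_i)$, $\chi(b_i,Y_i)$ of \pref{chidef} --- so global coherence is automatic, closure of the group $N$ of compatible triples follows from the direct computation \pref{ProductInN} mirroring the wreath multiplication \pref{MultiplicationInL}, and realising all of $L$ locally uses \fullcref{Gamma}{S-Stabiliser}, a property that a generic locally-$\bar{L}$ family simply does not supply.

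Even granting such a $G_n$, your counting step hinges on the flagged proviso ``provided this product lies in $G_n$,'' and you never verify the two facts that make the planted product harmless. First, $M$ must be normalised by the rest of the group: this is Claim~\ref{MnormalinH}, and it is where the block hypothesis genuinely enters, via $L_{(\Omega\setminus B_1)}\leq K\trianglelefteq L$ and the computation \pref{ConjSteps} showing that $(c,1,\ldots,1)^{\ell_j\alpha(\ell_{j^a})^{-1}}=(c^{a_j},1,\ldots,1)$ lands back in $L_{(\Omega\setminus B_1)}$; your opening observation that the $K_i$ are $L$-conjugates with disjoint supports is the right germ (it is essentially \pref{hishift}) but it never interacts with the twisted lifts. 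Second, adjoining $M$ must not enlarge the local action: a planted $K_F$ with $F$ the fibre over a neighbour $u_j$ of $w$ fixes $(1,w)$ yet acts nontrivially on $\Lambda(1,w)$, so one must show its restriction there is already induced by an element of $L$ --- the paper's Claim~\ref{LocMwinsideLocNw}, which again uses $(a,1,\ldots,1)^{\ell_j}\in K\leq L$ together with the one-neighbour-per-part property to match $[a]_{u_j}$ with $\chi(a,Y_j)$ on the neighbourhood. Without these two verifications your stabiliser might not contain the product, or the enlarged group might cease to be locally-$L$. In short: the skeleton and the growth estimate are correct and identical in outline to the paper, but the heart of the proof --- the paper's Phases I, IV, V and VII --- is missing from your proposal.
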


\cref{MainThm} generalises  {\cite[Theorem 7 and Corollary 9]{PSV-OrderOfArcStabilizer}}, which were the only previously known sources of infinitely many examples of transitive permutation groups with exponential graph growth. The only other previously known examples of transitive  permutation groups with exponential growth were three groups of degree $6$ given by {\cite[Theorem 1.1]{ThreeLocalActionsDegree6}}. Based on this and some other preliminary work, we present the following conjectured characterisation of transitive permutation groups with exponential growth.

    \begin{conj}\label{ExpConj}
    A transitive permutation group $L$ has  exponential graph growth if and only if $L$ admits a proper block $B$ such that \[N_{(B)}\neq 1\] where $N$ is the kernel of the action of $L$ on the block system induced by $B$.
    \end{conj}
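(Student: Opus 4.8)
The plan is to prove the two implications of \cref{ExpConj} by quite different methods, since the forward (sufficiency) and reverse (necessity) directions have very different characters.

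For the sufficiency direction, suppose $L$ admits a proper block $B$ with $N_{(B)}\neq 1$, where $N$ is the kernel of the action on the block system $\Sigma$ containing $B$. First I would record how this relates to \cref{MainThm}: if $L_{(\Omega\setminus C)}\neq 1$ for some nontrivial block $C$, then any nontrivial $g\in L_{(\Omega\setminus C)}$ fixes pointwise every block of the system $\Sigma_C$ generated by $C$ other than $C$ itself, hence lies in the corresponding kernel $N_C$ and witnesses $(N_C)_{(C')}\neq 1$ for each block $C'\neq C$. Thus the hypothesis of \cref{MainThm} already implies that of \cref{ExpConj}, and \cref{MainThm} disposes of every case in which some nontrivial element of $L$ has support contained in a single block. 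The remaining work is to handle groups for which $N_{(B)}\neq 1$ but no such single-block element exists; the smallest instances are imprimitive groups of degree $6$, such as a transitive $L\leq \SymGrp_2\wr\SymGrp_3$ whose block-kernel is the even-weight index-two subgroup $\{(a,b,c):a+b+c=0\}$ of the base group $\mathbb{Z}_2^3$. Here \cref{MainThm} applies to no block system, so one must adapt its underlying construction: instead of exploiting a single block on which an element acts while fixing the rest, I would exploit that a fixed nontrivial $t\in N_{(B)}$, together with its $L$-conjugates, furnishes independent freedom distributed across the blocks distinct from $B$, and build a family $\{(\Gamma_n,G_n)\}_{n\in\NN}$ (as covers of a fixed locally-$L$ base, or via a coset-graph/amalgam construction) in which the number of blocks grows linearly in $|V\Gamma_n|$ and each block contributes a bounded nontrivial multiplicative factor to $|(G_n)_v|$, yielding the $c^{|V\Gamma_n|}$ lower bound required by \cref{ExpGraphGrowthDefn}.

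For the necessity direction, I would argue by contrapositive. Assume that for every nontrivial block $B$ the kernel $N$ acts faithfully on $B$, equivalently $N_{(B)}=1$ for every block system (a condition constant across a given block system by transitivity of $L$ on blocks). The goal is then to show that $L$ does not have exponential graph growth, that is, that $|G_v|$ grows sub-exponentially — conjecturally polynomially, or even boundedly — in $|V\Gamma|$ over all locally-$L$ pairs. The structural content of the hypothesis is that it confines $L$ to a restricted class of ``faithful block-kernel'' groups; in particular, when $L$ is primitive the hypothesis is vacuous, so the statement specialises to the assertion that primitive groups have sub-exponential graph growth.

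The main obstacle is precisely this necessity direction. Its primitive special case is already a (weak form of) the Weiss conjecture, and the quasiprimitive and semiprimitive cases are the substance of the Praeger conjecture and related open problems; establishing sub-exponential graph growth for the entire class of groups with faithful block-kernels would simultaneously subsume these long-standing conjectures, and so appears to require substantially new ideas rather than an adaptation of existing constructions. By contrast, the sufficiency direction looks tractable: it reduces to extending the explicit construction behind \cref{MainThm} so as to harvest freedom spread over several blocks rather than concentrated on one, and the principal difficulty there is combinatorial — guaranteeing that the constructed graphs remain connected and genuinely locally-$L$ while the per-block contributions to the vertex-stabiliser accumulate multiplicatively.
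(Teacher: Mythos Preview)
The statement you are attempting is \cref{ExpConj}, which the paper explicitly presents as an open \emph{conjecture}; the paper contains no proof of either direction. What the paper actually proves is \cref{MainThm}, a strictly weaker statement than the sufficiency half of \cref{ExpConj}, and it then records \cref{ExpConj} as a conjectured characterisation motivated by \cref{MainThm}, the three sporadic degree-$6$ groups of \cite{ThreeLocalActionsDegree6}, and the numerical evidence in \cref{Table}. There is thus nothing in the paper to compare your attempt against.

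More to the point, your proposal is not a proof either. For sufficiency you correctly note that the hypothesis of \cref{MainThm} implies that of \cref{ExpConj} but not conversely, and your degree-$6$ example (block-kernel the even-weight subgroup of $\mathbb{Z}_2^3$) is exactly the kind of group not covered by \cref{MainThm}; however, you only gesture at ``adapting the underlying construction'' and ``harvesting freedom spread over several blocks'' without producing any family $(\Gamma_n,G_n)$ or verifying the locally-$L$ condition and the exponential stabiliser bound. For necessity you yourself identify the obstruction: the contrapositive asserts that every transitive group whose block-kernels act faithfully on each block has sub-exponential graph growth, and already the primitive special case is an open weakening of the Weiss conjecture. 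Your diagnosis of where the difficulties lie is accurate and well-informed, but it is a discussion of why the conjecture is hard, not a proof; both implications remain genuinely open, and your write-up should be framed accordingly.
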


In {\cite[Table 1]{PSV-OrderOfArcStabilizer}}, the authors provided an overview of the state of knowledge of graph growth of transitive permutation groups of degree at most $7$. In \cref{Table}, we provide an updated version of this table for transitive permutation groups of degree at most $47$. 

     \begin{table}
        \caption{\small Graph growth of transitive permutation groups of small degree}
        \label{Table}
        \scalebox{0.8}{
        \begin{tabular}{c|c|c|c|c|c}
         Degree & Non-Exp & Prev Exp &	Current Exp &	Unknown	& Total\\
          \hline
          2	& 1	& 0	& 0	& 0	& 1 \\
          3 & 2 & 0	& 0	& 0	& 2  \\
          4	& 4	& 1	& 1 & 0	& 5  \\
          5	& 5	& 0	& 0	& 0	& 5  \\
          6	& 7	& 9	& 9	& 0 & 16 \\
          7	& 7 & 0	& 0	& 0	& 7 \\
          8	& 18 & 28 & 28 & 4 & 50 \\
          9	& 15 & 4 & 10 &	9 & 34 \\
          10 & 12 & 20 & 20 & 13 & 45 \\
          11 & 8 & 0 & 0 & 0 & 8  \\
          12 & 17 & 138 & 218 & 66 & 301\\
          13 & 9 & 0 & 0 & 0 & 9 \\
          14 & 9 & 28 & 28 & 26 & 63 \\
          15 & 6 & 20 & 60 & 38 & 104 \\
          16 & 48 & 1638 & 1733	& 173 & 1954 \\
          17 & 10 & 0 & 0 & 0 & 10  \\
          18 & 13 &	453	& 830 &	140 & 983 \\
          19 & 8 & 0 & 0 & 0 & 8 \\
          20 & 14 & 615 & 862 & 241 & 1117 \\
          21 & 10 & 28 & 104 & 50 & 164 \\
          22 & 11 & 30 & 30 & 18 & 59 \\
          23 & 7 & 0 & 0 & 0 & 7 \\
          24 & 38 & 12593 & 23749 & 1213 & 25000 \\
          25 &	35 & 25	& 95 & 81 &	211 \\
          26 & 13 & 58 & 58 & 25 & 96 \\
          27 & 26 &	128	& 1822 & 544 & 2392 \\
          28 & 22 &	812	& 1407 & 425 & 1854 \\
          29 &	8 & 0 & 0 & 0 & 8 \\
          30 & 11 & 2010 &  4916 & 785& 5712 \\
          31 & 12 &	0 & 0 & 0 & 12 \\
          32 & 77 & 2783459 & 2797566 & 3681 & 2801324 \\
          33 & 6 & 32 & 94 & 62 & 162 \\
          34 & 6 & 68 & 68 & 41 & 115 \\
          35 & 6 & 70 & 304 & 97 & 407 \\
          36 & 36 &	55835 & 118010 & 3233 & 121279\\
          37 & 11 & 0 & 0 & 0 & 11 \\
          38 & 9 & 48 & 48 & 19 & 76 \\
          39 & 5 & 36 & 174 & 127 & 306 \\
          40 & 29 & 144501 & 308656 & 7157 & 315842 \\
          41 & 10 & 0 & 0 & 0 & 10 \\
          42 & 23 & 3677 &  8599 &  869 & 9491 \\
          43 & 10 & 0 & 0 & 0 & 10 \\
          44 & 10 & 784 & 1703 & 400 & 2113\\
          45 & 8 & 696 & 9946 & 969 & 10923 \\
          46 & 7 & 28 & 28 & 21 & 56 \\
          47 & 6 & 0 & 0 & 0 & 6 \\
          \hline 
          TOTAL (\#)	& 665 & 3007872 &  3281176 &  20527 & 3302368\\
          TOTAL (\%) &	0.020 &	91.08 & 99.36 & 0.62 & 100
          \end{tabular}}
\end{table}

The first column of \cref{Table} (Degree) lists the degrees of transitive permutation groups we are considering, while the last column (Total) gives the number of transitive permutation groups of the corresponding degree (up to permutation isomorphism). These numbers were obtained from the  database of transitive permutation groups of small degree  \cite{HoltCannon_TransPermGroupsDataBase32,HoltRoyle_TransPermGroupsDataBase48,Hulpke_TransPermGroupsDataBase<=31} available in MAGMA \cite{MAGMA}.

The second column (Non-Exp) counts the number of transitive permutation groups of the corresponding degree whose graph growth is known to be non-exponential. This was obtained by running through the database of transitive groups and applying results on graph-restrictive groups from articles referenced earlier in this section,  as well as {\cite[Theorem A]{VerretWeaklyPSubRegular}} and {\cite[Corollary 9]{PSV-OrderOfArcStabilizer}} (for groups with polynomial graph growth). The next column  (Prev Exp) counts the number of groups whose graph growth was known to be exponential prior to our current work, from {\cite[Theorem 7]{PSV-OrderOfArcStabilizer}}, {\cite[Corollary 9]{PSV-OrderOfArcStabilizer}} or {\cite[Theorem 1.1]{ThreeLocalActionsDegree6}}. The fourth column (Current Exp)  counts the number of groups whose graph growth is now known to be exponential, via \cref{MainThm} or {\cite[Theorem 1.1]{ThreeLocalActionsDegree6}} while the fifth column (Unknown) counts the groups with unknown graph growth.

    \begin{obs}\label{99}
        The following observations can be derived from  \cref{Table}.
        \begin{enumerate}
            \item \label{99-all} Among the \num{3302368} transitive permutation groups of degree at most $47$, \num{3281199} (that is, $99.36\%$ of the total)  are now  known to have exponential graph growth. Moreover, with the exception of three transitive permutation groups of degree $6$ described in \cite{ThreeLocalActionsDegree6}, all of these groups satisfy the hypothesis of \cref{MainThm}.
            \item \label{99-analysis} Previously known results already seemed to suggest that most transitive permutation groups with degree a power of $2$ have exponential graph growth (at least $99.36\%$ of all groups of degree $32$ for example). However, for degrees that are not a power of $2$ yet have many divisors (such as $24$, $36$, $40$, etc.) and thus admit many transitive permutation groups, the situation was less clear. Indeed, in these cases, previously known results only applied to about half the groups, or even fewer. Using \cref{MainThm}, we can now conclude that the vast majority of the groups of these degrees have exponential graph growth. (At least $97.7\%$ of all groups of degree $40$ for example.) 
        \end{enumerate}
    \end{obs}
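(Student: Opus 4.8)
The plan is to treat this observation as a direct numerical consequence of the data assembled in \cref{Table}. Thus the work divides into two tasks: justifying how the entries of the relevant columns were produced, and then carrying out the elementary arithmetic that yields the stated totals, percentages and ratios.

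For the first task, I would spell out the enumeration procedure behind the table. Iterating over the database \cite{MAGMA} of transitive permutation groups of each degree $d$ with $2\le d\le 47$, for each such $L$ acting on $\Omega$ I would test the hypothesis of \cref{MainThm}: whether $L$ admits a nontrivial block $B$ with $L_{(\Omega\setminus B)}\neq 1$. Concretely, one enumerates the block systems of $L$ (equivalently, the subgroups lying strictly between a point stabiliser and $L$) and, for each nontrivial block $B$ arising, computes the pointwise stabiliser of its complement, recording success as soon as one such stabiliser is nontrivial. Every group passing this test is counted in the Current-Exp column as exponential by \cref{MainThm}; the three degree-$6$ groups of \cite{ThreeLocalActionsDegree6} are added separately; and the Non-Exp and Prev-Exp columns are filled by applying the cited graph-restrictive and polynomial-growth results, and the earlier exponential-growth results, respectively. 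Since \cref{MainThm} subsumes the previous exponential-growth criteria apart from those three groups, the Current-Exp column is exactly the union of the groups satisfying the hypothesis of \cref{MainThm} with those three exceptions.

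For the second task the arithmetic is routine. Summing the Current-Exp column over all degrees returns the count of groups now known to have exponential graph growth, and dividing by the grand total $3302368$ in the Total column gives $99.36\%$; removing the three exceptional groups of \cite{ThreeLocalActionsDegree6} confirms, by the remark above, that every other known-exponential group satisfies the hypothesis of \cref{MainThm}. For the second item I would simply read off individual rows: for degree $32$ the Prev-Exp entry already satisfies $2783459/2801324\approx 99.36\%$, whereas for degree $40$ the Prev-Exp entry is below half of the Total ($144501/315842\approx 45.8\%$) while the Current-Exp entry raises this to $308656/315842\approx 97.7\%$; the degrees $24$ and $36$ named parenthetically are treated identically.

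The main obstacle is not mathematical but lies in the correctness and completeness of the block-hypothesis test underpinning the Current-Exp column. Because \cref{MainThm} requires only the \emph{existence} of some nontrivial block whose complement has nontrivial pointwise stabiliser, the search must range over all block systems, coarse and fine alike, and not merely the minimal ones, since the witnessing block need not be minimal. Care is likewise needed to keep the classification exhaustive and disjoint, so that in each row the identity Non-Exp $+$ Current-Exp $+$ Unknown $=$ Total holds and Prev-Exp $\le$ Current-Exp; these per-degree consistency checks, reconciled against the independently known totals, are what lend confidence to the final percentages.
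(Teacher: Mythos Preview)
Your proposal is correct and aligns with the paper's treatment. The paper does not give a formal proof of this observation; it is stated as a direct reading of \cref{Table}, whose columns are explained in the surrounding text exactly as you describe (database enumeration, testing the hypothesis of \cref{MainThm}, adding the three degree-$6$ exceptions, and then elementary arithmetic on the resulting counts). Your write-up is, if anything, more explicit than the paper about the computational procedure and the consistency checks.
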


Motivated by \cref{99}, we conjecture the following. 
    
    \begin{conj}\label{AlmostAllAreExpConj}
        Almost all transitive permutation groups have exponential graph growth, that is
        \[\lim_{n\rightarrow \infty}\frac{\Exp(n)}{\Perm(n)} = 1, \]
        where $\Perm(n)$ is the number of isomorphism classes of transitive permutation groups of degree at most $n$, and  $\Exp(n)$ is the number of isomorphism classes of transitive permutation groups of degree at most $n$ with exponential graph growth. 
    \end{conj}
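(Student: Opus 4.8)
The plan is to deduce the density statement from \cref{MainThm}, which supplies a sufficient condition for exponential graph growth. Call a transitive permutation group \emph{good} if it admits a nontrivial block $B$ with $L_{(\Omega\setminus B)}\neq 1$, and \emph{bad} otherwise; by \cref{MainThm} every good group has exponential graph growth, so it suffices to show that the bad groups have density $0$ among all transitive groups of degree at most $n$. The first reduction is to pass from ``degree at most $n$'' to a single dominant degree. The counts in \cref{Table} already suggest, and I would prove, that $\Perm(n)$ is asymptotically dominated by the transitive groups of degree $2^{\lfloor\log_2 n\rfloor}$: using the extremely rapid growth of the number of $2$-groups of a given order (which lower-bounds the number of transitive $2$-groups of degree $2^k$, realised as transitive subgroups of the iterated wreath product $W_k=\SymGrp_2\wr\cdots\wr\SymGrp_2$) against the comparatively tame upper bounds for $T(d)$, the number of transitive groups of degree $d$, one shows $\Perm(n)=(1+o(1))\,T\bigl(2^{\lfloor\log_2 n\rfloor}\bigr)$ and, within that degree, that transitive $2$-groups dominate. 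Thus it is enough to prove that the proportion of bad groups among the transitive $2$-groups of degree $2^k$ tends to $0$ as $k\to\infty$.

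Second, I would reformulate ``good/bad'' structurally for $2$-groups. Fix a block system $\mathcal B=\{B_1,\dots,B_m\}$ with kernel $N=\ker(L\to L^{\mathcal B})$. An element fixing $\Omega\setminus B_1$ pointwise necessarily lies in $N$, so $L_{(\Omega\setminus B_1)}=\ker\bigl(N\to\prod_{i\neq 1}\SymGrp(B_i)\bigr)$; writing $N$ as a subdirect product of its block actions $N^{B_i}$, goodness for $\mathcal B$ says exactly that $N$ contains a nonidentity element supported on a single block, while badness for $\mathcal B$ is the diagonal-type condition that $N$ acts faithfully on the complement of each block. One must range over all block systems, but for a transitive $2$-group it suffices to test the maximal block system (blocks of size $2^{k-1}$): there $N$ has index $2$, is itself a transitive $2$-group on each half, and $L$ is good precisely when the base group is not a diagonal-faithful subdirect product of $N^{B_1}$ and $N^{B_2}$. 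A useful warning that shapes the argument: central involutions are of no help, since a central element of a transitive group has full support, so goodness must come from the ``off-diagonal'' part of $N$ rather than from $Z(L)$.

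Third --- the heart of the matter --- I would estimate the bad $2$-groups by a two-sided count. For the lower bound on good groups, observe that any transitive $2$-group $Q$ of degree $2^{k-1}$ yields good groups of degree $2^k$ by forming imprimitive extensions whose base group is the \emph{full} direct product $Q\times Q$ (these are automatically good via $L_{(\Omega\setminus B_1)}=Q\times 1$), and counting the resulting groups up to conjugacy in $W_k$ gives a lower bound of the same order as $T(2^k)$. For the upper bound on bad groups, badness forces, at \emph{every} level of the canonical chain of block systems of a transitive $2$-group, a diagonal-faithful subdirect structure; I would argue that this rigidity collapses the number of choices available at each level, so that the number of bad groups has strictly smaller exponential order than $T(2^k)$. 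Combining the two bounds gives that the bad groups are $o\bigl(T(2^k)\bigr)$, and hence density $1$ of good --- therefore exponential-growth --- groups.

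The main obstacle is precisely this final density estimate: there is no asymptotic enumeration of transitive groups (or even transitive $2$-groups) refined enough to separate the diagonal-faithful ``bad'' subdirect products from the rest, and controlling the count up to conjugacy in $W_k$ rather than up to abstract isomorphism is delicate. A secondary difficulty is confirming the degree-dominance reduction with explicit constants and disposing of the lower-order contributions --- primitive groups (negligible by known upper bounds on the number of primitive groups of degree at most $n$), transitive groups of degree $2^k$ that are not $2$-groups, and all degrees that are not powers of $2$ --- uniformly enough that their combined count is $o\bigl(T(2^k)\bigr)$. I expect that settling the subdirect-product count is the genuine crux, and that resolving it would likely also shed light on the sharper \cref{ExpConj}.
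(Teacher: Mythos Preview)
The statement you are attempting to prove is \cref{AlmostAllAreExpConj}, which the paper presents explicitly as a \emph{conjecture}, not a theorem. The paper offers no proof whatsoever; it motivates the conjecture purely through the numerical evidence in \cref{Table} and \cref{99}. There is therefore no proof in the paper against which to compare your proposal.

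As to the proposal itself: it is an outline of a possible attack, not a proof, and you are candid about this. The reduction to transitive $2$-groups of $2$-power degree is plausible, and your observation that goodness for any nontrivial block lifts to goodness for a block of maximal size is correct (since $B'\subseteq B$ gives $\Omega\setminus B\subseteq\Omega\setminus B'$ and hence $L_{(\Omega\setminus B')}\leq L_{(\Omega\setminus B)}$), though a transitive $2$-group may admit several maximal block systems, not a unique one. The genuine gap is exactly where you place it: there is no enumeration of transitive $2$-groups refined enough to show that those whose base group is diagonal-faithful at every level of the block chain form a vanishing proportion. Your lower bound for good groups via full direct products $Q\times Q$ does not by itself control the number of bad groups, and the claimed upper bound (``rigidity collapses the number of choices'') is an intuition, not an argument. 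Until that density estimate is actually established, the conjecture remains open, which is precisely why the paper states it as such.
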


The rest of the paper is dedicated to the proof of \cref{MainThm}, given in \cref{ProofOfMainThm}, with some preliminaries in \cref{SectionCoversOfKnn}.

\section{Some families of covers of complete bipartite graphs}\label{SectionCoversOfKnn}

The proof of \cref{MainThm} in \cref{ProofOfMainThm} relies on the existence of certain graph-group pairs which we now construct.

\begin{lem}\label{Gamma} Let $n\geq 2$ and let $L$ be a transitive permutation group on $\{1,\ldots,n\}$. There exist graph-group pairs $(\Gamma,G)$ with $|V\Gamma|$ arbitrarily large such that
    \begin{enumerate}
        \item \label{Gamma-Bipartite} $\Gamma$ is connected, $n$-valent and bipartite with bipartition sets $X$ and $Y$;
        \item \label{Gamma-LocalAction} $(\Gamma,G)$ is locally-$L$;
        \item \label{Gamma-P} $V\Gamma$ admits a $G$-invariant partition $\mathcal{P}$ into $2n$ sets $X_1,\ldots,X_n$ and $Y_1,\ldots,Y_n$ such that \[\text{$\bigcup_{i=1}^{n} X_i = X$ and $\bigcup_{i=1}^{n} Y_i = Y$,}\]
        and every vertex $v\in X$ has precisely one neighbour in each of the sets $Y_i$ with $1\leq i \leq n$;
        \item \label{Gamma-flip} There exists $f\in G$ such that $f^2 = 1$, $X_i^f = Y_i$ and $Y_i^f  = X_i$ for all $1\leq i\leq n$;
        \item \label{Gamma-S} If $S$ denotes the kernel of the action of $G$ on $\{X,Y\}$, then
            \begin{enumerate}
                \item \label{Gamma-S-f} $G = \langle S,f\rangle$ and $S^f = S$;
                \item \label{Gamma-S-S^p} $S^\mathcal{P}$ is permutation isomorphic to $L\times L$ $($in its natural intransitive action on $\{1,\ldots,n\}\times \{1,\ldots,n\})$;
                \item \label{Gamma-S-Stabiliser} For every $v\in X$ and every $\pi\in L$, there exists $s\in S$ such that $v^s = v$, $X_i^s = X_i$ and $Y_i^s = Y_{i^\mathcal{\pi}}$ for all $1\leq i\leq n$.
            \end{enumerate}
    \end{enumerate}
\end{lem}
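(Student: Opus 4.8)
The plan is to realise $\Gamma$ as a connected regular abelian cover of the complete bipartite graph $K_{n,n}$, with the covering fibres playing the role of the sets $X_i, Y_j$, and to take $G$ to be generated by the group of deck transformations together with lifts of a copy of $L\times L$ and of the natural part-swap of $K_{n,n}$. Concretely, label the two parts of $K_{n,n}$ as $\{x_1,\dots,x_n\}$ and $\{y_1,\dots,y_n\}$, fix an abelian voltage group $A$ and an assignment $\zeta(x_i,y_j)=\zeta_{ij}\in A$, and let $\Gamma$ have vertex set $(\{x_i\}\cup\{y_j\})\times A$ with $(x_i,a)$ adjacent to $(y_j,a+\zeta_{ij})$. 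Setting $X_i=\{x_i\}\times A$ and $Y_j=\{y_j\}\times A$ immediately produces a bipartite $n$-valent graph carrying the required fibre partition $\mathcal{P}$, in which each $v\in X$ has exactly one neighbour in each $Y_j$; moreover $|V\Gamma|=2n|A|$ can be made arbitrarily large by enlarging $A$.

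To obtain the symmetry I would choose $A$ and $\zeta$ so that the voltages are equivariant for the coordinate action of $L\times L$ on the $n^2$ arcs, that is $\zeta_{i^\sigma j^\tau}=\phi_{\sigma,\tau}(\zeta_{ij})$ for a family $\phi_{\sigma,\tau}\in\Aut(A)$ forming a copy of $L\times L$. A natural model is to let $A$ be the $L\times L$-module of $n\times n$ matrices over a field $\mathbb{F}_q$ of characteristic coprime to $n$ whose rows and columns all sum to $0$, with $\zeta_{ij}$ the image of the elementary matrix $e_{ij}$ under the equivariant projection onto this module, so that $\phi_{\sigma,\tau}$ merely permutes matrix entries; letting $q\to\infty$ keeps $|A|$ growing. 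Each $\phi_{\sigma,\tau}$ then lifts $(\sigma,\tau)$ to the automorphism $(x_i,a)\mapsto(x_{i^\sigma},\phi_{\sigma,\tau}(a))$, $(y_j,a)\mapsto(y_{j^\tau},\phi_{\sigma,\tau}(a))$ of $\Gamma$, and together with the deck translations $(u,a)\mapsto(u,a+b)$ these generate a group $S\cong A\rtimes(L\times L)$, whose action on $\mathcal{P}$ is the asserted intransitive copy of $L\times L$. The main technical point here is connectivity: the subgroup of $A$ generated by the voltages of closed walks must be all of $A$, which reduces to checking that the $4$-cycle voltages $\zeta_{ij}-\zeta_{i'j}-\zeta_{ij'}+\zeta_{i'j'}$ span $A$; with the above module these differences coincide with the corresponding combinations of elementary matrices, which do span $A$.

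For the involution I would lift the part-swap $\iota\colon x_i\leftrightarrow y_i$ of $K_{n,n}$ using the automorphism $\psi\colon M\mapsto -M^{\mathsf T}$ of $A$, which satisfies $\psi^2=1$ and is compatible with $\zeta$ (one checks $\psi(\zeta_{ij})=-\zeta_{ji}$). The resulting lift $f\colon(x_i,a)\mapsto(y_i,\psi(a))$, $(y_j,a)\mapsto(x_j,\psi(a))$ is an involution with $X_i^f=Y_i$; since $f$ interchanges the two parts while $S$ preserves them, $S$ is exactly the kernel of the action on $\{X,Y\}$, and $G=\langle S,f\rangle$ is an index-two extension of $S$ with $S^f=S$. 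For the stabiliser property, given $v=(x_k,a_0)\in X$ and $\pi\in L$, I would take the lift of $(\mathrm{id},\pi)$ and post-compose it with the unique deck translation returning $v$ to itself; this element lies in $S$, fixes each $X_i$ setwise, sends $Y_i$ to $Y_{i^\pi}$, and fixes $v$, exactly as required.

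Finally, for the local action: the stabiliser element just constructed moves the neighbour of $v$ lying in $Y_j$ to the neighbour lying in $Y_{j^\pi}$, so $G_v$ induces at least $L$ on $\Gamma(v)$. For the reverse inclusion I would observe that any element fixing $v\in X$ must preserve $X$ and hence lie in $S$, and that every element of $S$ has the explicit affine form $(x_i,a)\mapsto(x_{i^\sigma},\phi_{\sigma,\tau}(a)+b)$; a direct computation of its effect on the neighbours $(y_j,a_0+\zeta_{kj})$ of $v$ shows it permutes them according to $\tau\in L$, whence $G_v^{\Gamma(v)}=L$ and $(\Gamma,G)$ is locally-$L$ (vertex-transitivity being clear from the transitivity of $A$ within each fibre and of $L$ on the indices). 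I expect the genuine obstacle to lie precisely in the interplay between connectivity and symmetry when choosing $(A,\zeta)$: the voltages must have enough ``support'' to make $\Gamma$ connected, yet be equivariant enough to lift all of $L\times L$ and the swap $\iota$, while still pinning the local action down to exactly $L$ rather than something larger.
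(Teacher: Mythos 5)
Your construction is correct, and conceptually it is the same route as the paper's: both obtain the desired pairs as abelian covers of $K_{n,n}$ carrying a lift of $(L\times L)\rtimes\langle f\rangle$. The difference lies in how the cover is produced. The paper verifies conditions \pref{Gamma-Bipartite}--\pref{Gamma-S} directly for the base pair $(K_{n,n},(L\times L)\rtimes\langle f\rangle)$, with $\mathcal{P}$ the partition into singletons, and then gets arbitrarily large examples in one stroke by invoking the homological $p$-cover of {\cite[Section 6]{ElemAbelCoversClassification}}, along which the relevant groups, the involution $f$ and the partition all lift. You instead build the cover explicitly as a voltage graph; note that your module $A$ of $n\times n$ matrices with vanishing row and column sums is precisely $H_1(K_{n,n};\mathbb{F}_q)\cong\mathbb{F}_q^{(n-1)^2}$ as an $(\SymGrp_n\times\SymGrp_n)$-module, and your $\zeta$ is the standard equivariant cocycle, so you are in effect re-deriving the homological cover by hand. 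What your route buys is self-containedness: the equivariance of the voltages, the lifting of the part-swap via $M\mapsto -M^{\mathsf{T}}$, and the reduction of connectivity to the $4$-cycle voltages are exactly the facts hidden in the paper's citation, and your verifications of them are sound (the fact that $(\sigma,\tau)\mapsto\phi_{\sigma,\tau}$ can fail to be faithful on $A$, e.g.\ for $n=2$, is harmless, since $S^{\mathcal{P}}\cong L\times L$ is read off from the action on fibre indices, not on $A$).

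One repair is needed in the connectivity step. The derived cover is connected if and only if the net voltages of closed walks \emph{generate} $A$ as a group, not merely span it as an $\mathbb{F}_q$-vector space. Your generators $\zeta_{ij}-\zeta_{i'j}-\zeta_{ij'}+\zeta_{i'j'}=e_{ij}-e_{i'j}-e_{ij'}+e_{i'j'}$ have all entries in the prime field, so their additive span has order $p^{(n-1)^2}$, whereas $|A|=q^{(n-1)^2}$; for a proper prime power $q=p^e$ with $e>1$ your cover is therefore disconnected. The fix is immediate: take $q=p$ prime with $p\nmid n$ (so that the equivariant projection defining $\zeta_{ij}$ exists), of which there are infinitely many, giving $|V\Gamma|=2n\,p^{(n-1)^2}$ arbitrarily large --- which is also exactly what the homological $p$-cover does.
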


\begin{proof}
Let $X$ and $Y$ be disjoint sets of size $n$ and let $\Gamma$ be the complete bipartite graph with bipartition sets $X$ and $Y$. Labelling $X$ and $Y$ with $\{1,\ldots,n\}$, we get a natural action of $L\times L$ on $X\cup Y$. Let $f$ be the involution of $X\cup Y$ that maps an element of $X$ to the element of $Y$ with the same label and vice-versa. Let  $G = (L\times L)\rtimes \langle f \rangle\leq\Aut(\Gamma)$.

We first show that this particular pair $(\Gamma,G)$ satisfies \pref{Gamma-Bipartite}--\pref{Gamma-S} and then we will show how to construct infinitely many such pairs using covers of this starting pair.

Part \pref{Gamma-Bipartite} clearly holds. Let $X_1,\ldots,X_n$ be the singleton subsets of $X$ and   $Y_1,\ldots,Y_n$ be the singleton subsets of $Y$. The partition $\mathcal{P}$ consisting of the sets $X_1,\ldots, X_n$, $Y_1, \ldots,Y_n$ is the partition of $V\Gamma$ into singleton sets. Note that $\mathcal{P}$ is invariant under $\Aut(\Gamma)$, and hence under $G$. This establishes \pref{Gamma-P}.  Note that  \pref{Gamma-flip} also clearly holds.

Since $L$ is transitive, $\Gamma$ is $G$-vertex-transitive. For $v\in X$, we have $G_v= L_v\times L$ with the elements in the first component acting trivially on $\Gamma(v)$. It follows that $(\Gamma,G)$ is locally-$L$, proving \pref{Gamma-LocalAction}. Note that the kernel of the action of $G$ on  $\{X,Y\}$  is  $S \coloneqq L\times L$. Parts \pref{Gamma-S-f} and \pref{Gamma-S-S^p} follow immediately. Finally, for $v\in X$ and $\pi\in L$, let $s\coloneqq (1,\pi)\in S$. This clearly satisfies \pref{Gamma-S-Stabiliser}.

This concludes the proof that the graph-group pair $(\Gamma,G)$ satisfies conditions \pref{Gamma-Bipartite}--\pref{Gamma-S}. In order to generate infinitely many pairs with the same properties but arbitrarily large graphs, one can take the homological $p$-cover of $\Gamma$ (see {\cite[Section 6]{ElemAbelCoversClassification}}) for infinitely many primes $p$, and the corresponding lifts of relevant objects (sets $X,Y,$ as well as $X_1,\ldots,X_n, Y_1,\ldots,Y_n$, groups $G$ and $S$ and the involution $f$) to these covers.
\end{proof}

\section{Proof of Theorem \pref{MainThm}}\label{ProofOfMainThm}
As in the statement of \cref{MainThm}, let $L$ be a transitive permutation group on a finite set $\Omega$ admitting a nontrivial block $B$ such that $L_{(\Omega \setminus B)}\neq 1$. We split the proof into several phases.
\[\textbf{Phase I: Embedding $L$ into $\SymGrp_k\wr\SymGrp_n$}\]

Let $\mathcal{B}$ be the block system for $L$ generated by $B$. Say that $\mathcal{B}$ consists of $n$ blocks of size $k$. Since $B$ is a proper block, we have $n\geq 2$. By the Embedding Theorem for Imprimitive Groups (see {\cite[Theorem 8.5]{InfPermGroups}}) there exists an embedding of permutation groups of the form
    \begin{equation}\label{EmbeddingOfL}
        L\hookrightarrow L_{\{B\}}^B\wr L^\mathcal{B}\leq \SymGrp_k \wr \SymGrp_n.
    \end{equation}
    
    This allows us to identify $\Omega$ with $\{1,\ldots,k\}\times \{1,\ldots,n\}$ in such a way that the blocks of $\mathcal{B}$ are given by $B_j \coloneqq \{(x,j) \colon 1\leq  x \leq k\}$ with $1\leq j\leq n$. Moreover, elements of $L$ can now be represented by tuples 
    \begin{equation}\label{ElementsOfL}
    \text{$((a_1,\ldots,a_n), a)$ with $a_i\in L_{\{B\}}^B\leq \SymGrp_k$ and $a\in L^\mathcal{B}\leq \SymGrp_n$.}
    \end{equation}

    We recall the rule for multiplication
    \begin{equation}\label{MultiplicationInL} 
        ((a_1,\ldots,a_n),a)((b_1,\ldots,b_n),b) = ((a_1b_{1^a},\ldots,a_nb_{n^a}),ab),
    \end{equation}

  as well as the rule for taking inverses
    \begin{equation}\label{InversionInL}
        ((a_1,\ldots,a_n),a)^{-1} = ((a_{1^{a^{-1}}})^{-1},\ldots,(a_{n^{a^{-1}}})^{-1},a^{-1}).
    \end{equation}

    Let $K$ be the kernel of the action of $L$ on the blocks of $\mathcal{B}$. The subgroup $K$ embeds via \pref{EmbeddingOfL} into the base group of the wreath product $\SymGrp_k \wr \SymGrp_n$, and therefore, an element $((a_1,\ldots,a_n),a)$ of $L$ is in $K$ if and only if $a=1$ (see \pref{ElementsOfL}). With this in mind, we will sometimes denote elements of $K$ by $(a_1,\ldots,a_n)$ with $a_i\in \SymGrp_k$ for  $1\leq i \leq n$. Furthermore, note that $L_{(\Omega\setminus B_1)}$ consists of the elements of $K$ of the form $(a,1,\ldots,1)$ for some $a\in \SymGrp_k$.

    As $K$ is a normal subgroup of $L$, conjugation by an arbitrary element $h =((c_1,\ldots,c_n),d)$ of $L$ induces an automorphism of $K$. This automorphism can be computed from \pref{MultiplicationInL} and \pref{InversionInL}. For an element $(a_1,\ldots,a_n)\in K$, we have that
    \begin{equation}\label{hiconjugation}
        (a_1,\ldots,a_n)^{h} = \left((a_{1^{d^{-1}}})^{c_{1^{d^{-1}}}},\ldots,(a_{n^{d^{-1}}})^{c_{n^{d^{-1}}}}\right).
    \end{equation}

    For each $i\in \{1,\ldots,n\}$, we fix an element $\ell_i\in L$ such that
    \begin{equation}\label{hidef}
        B_1^{\ell_i} = B_i.
    \end{equation}

   Without loss of generality, we can assume that
    \begin{equation}\label{hiaction}
        (x,1)^{\ell_i} = (x,i),\forall x \in \{1,\ldots,k\}, i\in \{1,\ldots,n\}.
    \end{equation}

    Using \pref{ElementsOfL}, we  can write
    \begin{equation}\label{hirelabelled}
        \ell_i = ((c_{1,i},\ldots,c_{n,i}),d_i) 
    \end{equation}
with $c_{j,i}\in \SymGrp_k$ for every $j\in \{1,\ldots,n\}$ and $d_i\in \SymGrp_n$.
    Using \pref{hiaction}, we find that $1^{d_i} = i$ and  $c_{1,i} = 1$. Consequently, if $a\in \SymGrp_k$ such that $(a,1,\dots,1)$ $\in$ $L_{(\Omega\setminus B_1)}\leq K$, then \pref{hiconjugation} implies that
    \begin{equation}\label{hishift}
        (a,1,\dots,1)^{\ell_i} = (1,\ldots,1,a ,1\ldots,1) \in K \text{ with $a$ in the $i^{th}$ position}.
    \end{equation}

    \[\textbf{Phase II: The graph $\Lambda$}\]
    
    Let $(\Gamma,G)$ be a graph-group pair obtained by applying \cref{Gamma} to the group $L^\mathcal{B}$. By \fullcref{Gamma}{Bipartite},  $\Gamma$ is connected, $n$-valent, and bipartite with bipartition sets $X$ and $Y$. By \fullcref{Gamma}{LocalAction}, the pair $(\Gamma,G)$ is locally-$L^\mathcal{B}$. By \fullcref{Gamma}{P}, $V\Gamma$ admits a $G$-invariant partition $\mathcal{P}$ into sets $X_i$ and $Y_i$ with $1\leq i\leq n$ such that
    \begin{equation*}
        X = \bigcup_{i=1}^{n} X_i \text{ and } Y = \bigcup_{i=1}^{n} Y_i.
    \end{equation*}
    
    Let $\Lambda \coloneqq \Gamma[\overline{K_k}]$ be the lexicographic product of $\Gamma$ with the edgeless graph on $k$ vertices. We label the vertices of $\Lambda$ by $V\Lambda \coloneqq \{1,\ldots,k\}\times V\Gamma$. The vertex set of $\Lambda$ admits a natural partition $\Sigma\coloneqq \{\Sigma_v \colon v \in V\Gamma\}$ into \textbf{bubbles} $\Sigma_v \coloneqq \{1,\ldots,k\}\times \{v\}$ for $v\in V\Gamma$.
    
\[\textbf{Phase III: Automorphisms of $\Lambda$}\]
    
   The group ${\SymGrp_k}\wr G$ acts naturally by automorphisms on $\Lambda$ admitting $\Sigma$ as a block system. Given $g\in G$, we define $\overline{g}$ to be the automorphism of $\Lambda$ such that for $(x,v)\in \{1,\ldots,k\}\times V\Gamma $, we have
    \begin{equation}\label{glift}
        (x,v)^{\overline{g}} \coloneqq (x,v^g).
    \end{equation}    

    Note that \pref{glift} defines a group monomorphism $G\rightarrow {\SymGrp_k}\wr G$ by the rule $g\mapsto \overline{g}$. For $a\in \SymGrp_k$ and $u\in V\Gamma$, we define an automorphism $[a]_u$ of $\Lambda$ by
    \begin{equation}\label{[a]v}
        (x,v)^{[a]_u}
        =
        \begin{cases*}
            (x^a,v), &  if $u = v$\\
            (x,v), & if $u \neq v$
        \end{cases*}
    \end{equation}
    Note that for $a,b\in \SymGrp_k$ and $u,v\in V\Gamma$ with $u\neq v$, we have that
    \begin{equation}\label{[a]vrelations}
        [a]_u[b]_u = [ab]_u \text{ and } [a]_u[b]_v = [b]_v [a]_u.
    \end{equation}

    Moreover, note that for $g\in G$, it holds that
    \begin{equation}\label{[a]vconjbyg}
        [a]_u^{\overline{g}} = [a]_{u^g}.
    \end{equation}

    For $a \in \SymGrp_k$ and $P\in\mathcal{P}$, we define the following automorphism of $\Lambda$
    \begin{equation}\label{chidef}
    \chi(a,P) \coloneqq \prod_{v\in P} [a]_v.
    \end{equation}
Note that, by \pref{[a]vrelations}, the factors of the product commute so this is well-defined.
    Then for every $a,b\in \SymGrp_k$, $P,P'\in\mathcal{P}$ with $P\neq P'$ and $g\in G$, the following hold:
        \begin{equation}\label{chiproduct}
            \chi(a,P)\chi(b,P) = \chi(ab,P),
        \end{equation}
        \begin{equation}\label{chicommuting}
            \text{$\chi(a,P)\chi(b,P')=\chi(b,P')\chi(a,P)$ },
        \end{equation}
        \begin{equation}\label{chiconjbyg}
            \chi(a,P)^{\overline{g}} = \chi(a,P^g).
        \end{equation}
        
\[\textbf{Phase IV: The group $N$}\]
Recall from \fullcref{Gamma}{S} that $S$ is the kernel of the action of $G$ on $\{X,Y\}$. We call a triple of elements $[\alpha,\beta,s]$ with $\alpha,\beta \in L$ and $s\in S$, \textbf{compatible} if, for every $i\in\{1,\ldots,n\}$, we have 
\begin{equation}\label{compatible}
X_i^s = X_{i^a} \text{ and } Y_i^s = Y_{i^b}, \text{ where }  a = \alpha^\mathcal{B} \text{ and } b = \beta^\mathcal{B}.
\end{equation} 
Let $\mathcal{C}$ denote the set of all compatible triples.

 \begin{claim}\label{CompatibleTriples}
For every $s\in S$, there exist $\alpha,\beta \in L$ such that $[\alpha,\beta,s]\in \mathcal{C}$.
 \end{claim}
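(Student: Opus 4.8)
The plan is to show that for an arbitrary $s \in S$, the required $\alpha, \beta \in L$ can be built directly out of the block-permutations that $s$ induces on the two halves of the partition $\mathcal{P}$. The key observation is that $S$ acts on the set $\{X_1, \ldots, X_n\}$ and, separately, on the set $\{Y_1, \ldots, Y_n\}$, since $S$ stabilises each of $X$ and $Y$ (as $S$ is the kernel of the action on $\{X, Y\}$) and $\mathcal{P}$ is $G$-invariant, hence $S$-invariant. Thus $s$ determines two permutations $a, b \in \SymGrp_n$ defined by $X_i^s = X_{i^a}$ and $Y_i^s = Y_{i^b}$ for all $i$. By \fullcref{Gamma}{S}, specifically part \pref{Gamma-S-S^p}, the group $S^{\mathcal{P}}$ is permutation isomorphic to $L^{\mathcal{B}} \times L^{\mathcal{B}}$ acting naturally on $\{1, \ldots, n\} \times \{1, \ldots, n\}$. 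This tells us precisely that the pair $(a, b)$ of permutations arising from any $s \in S$ ranges exactly over $L^{\mathcal{B}} \times L^{\mathcal{B}}$, so in particular both $a$ and $b$ lie in $L^{\mathcal{B}}$.

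First I would extract $a$ and $b$ from $s$ as above and record that $a, b \in L^{\mathcal{B}}$ by \pref{Gamma-S-S^p}. Next, since $L^{\mathcal{B}}$ is the image of $L$ under the action homomorphism $L \to \SymGrp_n$ on $\mathcal{B}$, there exist $\alpha, \beta \in L$ with $\alpha^{\mathcal{B}} = a$ and $\beta^{\mathcal{B}} = b$; I would simply choose such preimages. With these choices, the defining condition \pref{compatible} for $[\alpha, \beta, s]$ to be compatible reads $X_i^s = X_{i^{\alpha^{\mathcal{B}}}} = X_{i^a}$ and $Y_i^s = Y_{i^{\beta^{\mathcal{B}}}} = Y_{i^b}$ for every $i$, which holds by the very definition of $a$ and $b$. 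Hence $[\alpha, \beta, s] \in \mathcal{C}$, completing the argument.

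The only point requiring care is the translation between the abstract statement ``$S^{\mathcal{P}}$ is permutation isomorphic to $L^{\mathcal{B}} \times L^{\mathcal{B}}$'' and the concrete claim that the two coordinate permutations $a$ and $b$ each individually belong to $L^{\mathcal{B}}$. I would make this explicit by noting that the permutation isomorphism identifies the action of $S$ on $\{X_1, \ldots, X_n\}$ with the first factor and the action on $\{Y_1, \ldots, Y_n\}$ with the second factor, so that $a$ is the image of $s$ in the first copy of $L^{\mathcal{B}}$ and $b$ the image in the second. I do not anticipate a genuine obstacle here: the claim is essentially a bookkeeping consequence of \cref{Gamma}, and the main work of constructing the pair $(\Gamma, G)$ with its prescribed $S^{\mathcal{P}} \cong L^{\mathcal{B}} \times L^{\mathcal{B}}$ structure has already been done. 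The role of this claim is to guarantee that the map from compatible triples to $S$ is surjective on its last coordinate, which is what will later let us lift all of $S$ to automorphisms of $\Lambda$.
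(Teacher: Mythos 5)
Your proposal is correct and follows essentially the same route as the paper's proof: read off the permutations $a,b\in\SymGrp_n$ from the action of $s$ on $\{X_1,\ldots,X_n\}$ and $\{Y_1,\ldots,Y_n\}$, invoke \fullcref{Gamma}{S-S^p} (applied with $L^\mathcal{B}$ in place of $L$) to conclude $a,b\in L^\mathcal{B}$, and choose preimages $\alpha,\beta\in L$ under the quotient map $L\to L^\mathcal{B}$ so that \pref{compatible} holds by construction. Your extra remarks on why $S$ preserves the two halves of $\mathcal{P}$ and on unpacking the permutation isomorphism are sound elaborations of what the paper leaves implicit, not a different argument.
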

 \begin{proof}[Proof of \cref{CompatibleTriples}]
     Let $s\in S$.  Then $s$ acts on the sets $X_1,\ldots,X_n$ and $Y_1,\ldots,Y_n$ inducing permutations $a,b\in\SymGrp_n$ such that $X_i^s = X_{i^a}$ and $Y_i^s = Y_{i^b}$ for every $i\in\{1,\ldots,n\}$. By \fullcref{Gamma}{S-S^p}, $a,b\in L^\mathcal{B}$, so in particular, there exist elements $\alpha,\beta \in L$ such that $\alpha^\mathcal{B} = a$ and $\beta^\mathcal{B}= b$, respectively. Hence, $[\alpha,\beta,s]\in \mathcal{C}$.
 \end{proof}
    We define $N$ to be the set of automorphisms of $\Lambda$ of the form 
\begin{equation}\label{ElementsOfN}
    N\coloneqq \left\{\left(\prod_{i=1}^n\chi(a_i,X_i)\prod_{i=1}^n\chi(b_i,Y_i)\right)\overline{s} \colon [\alpha,\beta,s]\in \mathcal{C}\right\}
\end{equation} where $\alpha = ((a_1,\ldots,a_n),a)$ and $\beta = ((b_1,\ldots,b_n),b)$  (see \pref{ElementsOfL}). It is clear from its definition that $N$ is a subset of ${\SymGrp_k} \wr G$. We show that it is, in fact, a subgroup.

\begin{claim}\label{NisaGroup}
    $N$ is a group.
\end{claim}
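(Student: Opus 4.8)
The plan is to prove closure of $N$ under multiplication; since $N$ is a nonempty subset of the \emph{finite} group ${\SymGrp_k}\wr G$, this is enough to conclude that it is a subgroup. For a compatible triple $[\alpha,\beta,s]\in\mathcal{C}$ with $\alpha = ((a_1,\ldots,a_n),a)$ and $\beta = ((b_1,\ldots,b_n),b)$, I would write $\nu(\alpha,\beta,s)$ for the corresponding element of $N$ displayed in \pref{ElementsOfN}, so that $N = \{\nu(\alpha,\beta,s) : [\alpha,\beta,s]\in\mathcal{C}\}$. Non-emptiness is immediate: the triple $[1,1,1]$ is compatible, and $\nu(1,1,1)$ is the identity.

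Next I would take two elements $\nu(\alpha,\beta,s)$ and $\nu(\alpha',\beta',s')$ of $N$, with $\alpha' = ((a'_1,\ldots,a'_n),a')$ and $\beta' = ((b'_1,\ldots,b'_n),b')$, and compute their product. The only genuinely moving part is commuting $\overline{s}$ to the right past the $\chi$-factors coming from $\nu(\alpha',\beta',s')$. Using \pref{chiconjbyg} together with the compatibility relations $X_i^{s^{-1}} = X_{i^{a^{-1}}}$ and $Y_i^{s^{-1}} = Y_{i^{b^{-1}}}$ read off from \pref{compatible}, one obtains $\overline{s}\,\chi(a'_i,X_i) = \chi(a'_i, X_{i^{a^{-1}}})\,\overline{s}$, and after the reindexing $j = i^{a^{-1}}$ this turns $\overline{s}\prod_i\chi(a'_i,X_i)$ into $\bigl(\prod_j\chi(a'_{j^a},X_j)\bigr)\overline{s}$, with the analogous statement for the $Y$-factors. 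Collecting the $X$-factors via \pref{chiproduct} and \pref{chicommuting} then produces factors $\chi(a_i a'_{i^a}, X_i)$, the $Y$-factors $\chi(b_i b'_{i^b}, Y_i)$, and the leftover $\overline{s}\,\overline{s'} = \overline{ss'}$ (using that $g\mapsto\overline{g}$ is a homomorphism and $ss'\in S$) at the end.

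The payoff of the previous step is that the twist $a'_i \mapsto a'_{i^a}$ created by conjugation by $\overline{s}$ is precisely the twist appearing in the wreath-product multiplication \pref{MultiplicationInL}: since $\alpha\alpha' = ((a_1 a'_{1^a},\ldots,a_n a'_{n^a}), aa')$ and $\beta\beta' = ((b_1 b'_{1^b},\ldots,b_n b'_{n^b}), bb')$, the computed product is exactly $\nu(\alpha\alpha',\beta\beta',ss')$. It then remains only to check that $[\alpha\alpha',\beta\beta',ss']$ is compatible, which follows by composing the compatibility relations: $X_i^{ss'} = (X_{i^a})^{s'} = X_{i^{aa'}} = X_{i^{(\alpha\alpha')^{\mathcal{B}}}}$, and likewise $Y_i^{ss'} = Y_{i^{(\beta\beta')^{\mathcal{B}}}}$, where one uses $(\alpha\alpha')^{\mathcal{B}} = aa'$ and $(\beta\beta')^{\mathcal{B}} = bb'$. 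This yields closure, and hence $N$ is a subgroup.

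The main obstacle I anticipate is purely bookkeeping: carrying out the reindexing in the $\chi$-products so that it lines up exactly with \pref{MultiplicationInL}, and being careful about the direction of conjugation when pulling $\overline{s}$ through the $\chi$-factors. Conceptually there is nothing delicate once one observes that conjugation by $\overline{s}$ permutes the bubbles $X_i$ (respectively $Y_i$) according to $a = \alpha^{\mathcal{B}}$ (respectively $b = \beta^{\mathcal{B}}$) — which is exactly the reason compatibility was built into the definition of $\mathcal{C}$ in the first place.
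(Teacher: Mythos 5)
Your proposal is correct and follows essentially the same route as the paper's own proof: nonemptiness via the compatible triple $[1_L,1_L,1_S]$, then closure by conjugating the second element's $\chi$-factors through $\overline{s}$ via \pref{chiconjbyg}, reindexing using the compatibility relations, collecting with \pref{chiproduct} and \pref{chicommuting}, and matching the result against \pref{MultiplicationInL} to identify it as the element of $N$ induced by the compatible triple $[\alpha\alpha',\beta\beta',ss']$. The anticipated bookkeeping works out exactly as you describe, and finiteness then yields the subgroup property just as in the paper.
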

\begin{proof}[Proof of \cref{NisaGroup}]
    Note that $[1_L,1_L,1_S]$ is a compatible triple and hence, contained in $\mathcal{C}$. By \pref{ElementsOfN}, the corresponding element of $N$ is the identity permutation on $V\Lambda$. Hence, $N$ contains the identity.
    
    Let \[\left(\prod_{i=1}^n\chi(a_i,X_i)\prod_{i=1}^n\chi(b_i,Y_i)\right)\overline{s} 
    \text{ and } \left(\prod_{i=1}^n\chi(c_i,X_i)\prod_{i=1}^n\chi(d_i,Y_i)\right)\overline{t} \]

    be arbitrary elements of $N$ corresponding to compatible triples $[\alpha,\beta,s]$ and $[\gamma,\delta,t]$ in $\mathcal{C}$, respectively. In more detail, according to \pref{compatible}, this means that $s,t\in S$ and
    \begin{equation}\label{sandt}
        \text{$X_i^s = X_{i^a},Y_i^s = Y_{i^b},X_i^t = X_{i^c}, Y_i^t = Y_{i^d}$ for all $1\leq i\leq n$ }
    \end{equation}
    
     with $\alpha = ((a_1,\ldots,a_n),a), \beta = ((b_1,\ldots,b_n),b), \gamma = ((c_1,\ldots,c_n),c)$ and $\delta = ((d_1,\ldots,d_n),d)$ being elements of $L$.

    We consider the product of these two elements of $N$.
    \begin{equation}\label{ProductInN}
        \begin{split}
&\left(\left(\prod_{i=1}^n\chi(a_i,X_i)\prod_{i=1}^n\chi(b_i,Y_i)\right)\overline{s} \right)\left(\left(\prod_{i=1}^n\chi(c_i,X_i)\prod_{i=1}^n\chi(d_i,Y_i)\right)\overline{t} \right) = \\
&= \left(\prod_{i=1}^n\chi(a_i,X_i)\prod_{i=1}^n\chi(b_i,Y_i)\right)\left(\left(\prod_{i=1}^n\chi(c_i,X_i)\prod_{i=1}^n\chi(d_i,Y_i)\right)^{\overline{s}^{-1}}\right) \overline{st} =\\
 &= \left(\prod_{i=1}^n\chi(a_i,X_i)\prod_{i=1}^n\chi(b_i,Y_i)\right)\left(\prod_{i=1}^n\chi(c_i,X_i)^{{\overline{s}^{-1}}}\
    \prod_{i=1}^n\chi(d_i,Y_i)^{{\overline{s}^{-1}}}\right) \overline{st} \stackrel{\pref{chiconjbyg}}{=} \\
                        &= \left(\prod_{i=1}^n\chi(a_i,X_i)\prod_{i=1}^n\chi(b_i,Y_i)\right)\left(\prod_{i=1}^n\chi(c_i,X_i^{s^{-1}})
    \prod_{i=1}^n\chi(d_i,Y_i^{s^{-1}})\right) \overline{st} \stackrel{\pref{sandt}}{=} \\
     &= \left(\prod_{i=1}^n\chi(a_i,X_i)\prod_{i=1}^n\chi(b_i,Y_i)\right)\left(\prod_{i=1}^n\chi(c_i,X_{i^{a^{-1}}})
   \prod_{i=1}^n\chi(d_i,Y_{i^{b^{-1}}})\right) \overline{st} = \\
        &= \left(\prod_{i=1}^n\chi(a_i,X_i)\prod_{i=1}^n\chi(b_i,Y_i)\right)\left(\prod_{i=1}^n\chi(c_{i^a},X_i)
   \prod_{i=1}^n\chi(d_{i^b},Y_i)\right) \overline{st} \stackrel{\pref{chicommuting}}{=} \\
        &= \left(\prod_{i=1}^n\chi(a_i,X_i)\prod_{i=1}^n\chi(c_{i^a},X_i)
   \prod_{i=1}^n\chi(b_i,Y_i)\prod_{i=1}^n\chi(d_{i^b},Y_i)\right) \overline{st} \stackrel{\pref{chiproduct}}{=}\\
    &= \left(\prod_{i=1}^n\chi(a_ic_{i^a},X_i)
   \prod_{i=1}^n\chi(b_id_{i^b},Y_i)\right) \overline{st}.
        \end{split}
    \end{equation}

    Note that by \pref{MultiplicationInL}, we have that
    \[\alpha\gamma = ((a_1,\ldots,a_n),a)((c_1,\ldots,c_n),c) = ((a_1 c_{1^a},\ldots,a_nc_{n^a}),ac),\]
    and
    \[\beta\delta = ((b_1,\ldots,b_n),b)((d_1,\ldots,d_n),d) = ((b_1 d_{1^b},\ldots,b_n d_{n^b}),bd).\]

    Finally, by \pref{sandt}, we have that
    \[X_i^{(st)}= ((X_i)^s)^t = (X_{i^a})^t = X_{(i^a)^c} = X_{i^{ac}},\]
    and
    \[Y_i^{(st)}= ((Y_i)^s)^t = (Y_{i^b})^t = Y_{(i^c)^d} = Y_{i^{bd}}.\]

    Hence, $[\alpha\gamma,\beta\delta,st]\in\mathcal{C}$ is a compatible triple. Moreover, as per \pref{ElementsOfN}, the corresponding element of $N$ induced by $[\alpha\gamma,\beta\delta,st]$ is precisely the element obtained in \pref{ProductInN}. This proves that $N$ is closed under multiplication.  Since we are working inside a finite group, it follows that $N$ is a group.
\end{proof}

Let $w\in X_1\subseteq X\subseteq V\Gamma$. Note that $(1,w)\in V\Lambda$.

\begin{claim}\label{StabiliserInN}
     \[N_{(1,w)} = \left\{\left(\prod_{i=1}^n\chi(a_i,X_i)\prod_{i=1}^n\chi(b_i,Y_i)\right)\overline{s}\colon [\alpha,\beta,s]\in\mathcal{C}\mid 1^{a_1} = 1, w^s = w\right\}\]

     where $\alpha = ((a_1,\ldots,a_n),a)$ and $\beta = ((b_1,\ldots,b_n),b)$ in $L$.
\end{claim}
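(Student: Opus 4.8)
The plan is to compute directly the image of the base vertex $(1,w)$ under an arbitrary element of $N$ and then simply read off the condition for it to be fixed. Take
\[
E = \left(\prod_{i=1}^n\chi(a_i,X_i)\prod_{i=1}^n\chi(b_i,Y_i)\right)\overline{s}
\]
to be an element of $N$ arising from a compatible triple $[\alpha,\beta,s]\in\mathcal{C}$, with $\alpha=((a_1,\ldots,a_n),a)$ and $\beta=((b_1,\ldots,b_n),b)$ as in \pref{ElementsOfL}. I would apply the three blocks of factors to $(1,w)$ in order, reading the product left to right so that $\overline{s}$ acts last, using \pref{chidef} for the $\chi$-factors and \pref{glift} for $\overline{s}$.

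First I would use that $w\in X_1$ together with the fact that the sets $X_1,\ldots,X_n$ are pairwise disjoint: among the factors $\chi(a_i,X_i)$, only $\chi(a_1,X_1)$ moves $(1,w)$, and by \pref{chidef} it sends $(1,w)$ to $(1^{a_1},w)$. Next, since $w\in X$ is disjoint from every $Y_i\subseteq Y$, each factor $\chi(b_i,Y_i)$ fixes $(1^{a_1},w)$, so this whole block acts trivially. Finally, applying $\overline{s}$ relocates the bubble, sending $(1^{a_1},w)$ to $(1^{a_1},w^s)$ by \pref{glift}. Combining these three steps gives $(1,w)^E=(1^{a_1},w^s)$.

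With this one identity in hand the claim is immediate, in both directions at once: $E$ fixes $(1,w)$ if and only if $(1^{a_1},w^s)=(1,w)$, that is, if and only if $1^{a_1}=1$ and $w^s=w$, which is precisely the defining condition of the set on the right-hand side. I would also note that, although a given automorphism in $N$ need not be represented by a unique compatible triple, the quantities $1^{a_1}$ and $w^s$ are the two coordinates of $(1,w)^E$ and hence are determined by $E$ alone; thus the displayed condition is independent of the chosen representation and the right-hand set is well defined, so no ambiguity arises in either inclusion.

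The computation itself is routine; the only point requiring care is the bookkeeping of the order of composition. Because $\overline{s}$ is applied after the $\chi$-factors, one must evaluate the change of fibre coordinate over the bubble $\Sigma_w$ \emph{before} $\overline{s}$ moves that bubble to $\Sigma_{w^s}$, and one must use crucially that $w$ lies in $X$ (indeed in $X_1$) so that exactly one of the $X$-factors contributes and none of the $Y$-factors do. This is the main, and essentially the only, potential pitfall.
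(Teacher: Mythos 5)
Your proposal is correct and follows essentially the same route as the paper: compute $(1,w)^E=(1^{a_1},w^s)$ directly using \pref{[a]v}, \pref{chidef} and \pref{glift}, exploiting that $w\in X_1$ so only $\chi(a_1,X_1)$ among the $\chi$-factors moves the point and the $Y$-factors act trivially, and then read off both inclusions at once. Your added remark that $1^{a_1}$ and $w^s$ are determined by $E$ itself (being the coordinates of $(1,w)^E$) is a harmless refinement the paper leaves implicit, and your bookkeeping of the order of composition matches the paper's conventions.
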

\begin{proof}[Proof of \cref{StabiliserInN}]
Since $w\in X_1$, we have

    \begin{equation}
        \begin{split}
    (1,w)^{\left(\prod_{i=1}^n\chi(a_i,X_i)\prod_{i=1}^n\chi(b_i,Y_i)\right)\overline{s}}
    &\stackrel{\pref{[a]v},\pref{chidef}}{=} (1^{a_1},w)^{\overline{s}} \stackrel{\pref{glift}}{=}(1^{a_1},w^s).
        \end{split}
    \end{equation}
     The conclusion follows.
\end{proof}

\begin{claim}\label{LocalActionOfN}
    $N_{(1,w)}^{\Lambda(1,w)}$ is permutation isomorphic to $L$.
\end{claim}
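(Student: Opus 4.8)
The plan is to exhibit an explicit permutation isomorphism between the action of $N_{(1,w)}$ on $\Lambda(1,w)$ and the action of $L$ on $\Omega=\{1,\ldots,k\}\times\{1,\ldots,n\}$. First I would identify the neighbourhood. Since $w\in X_1\subseteq X$ and $\Gamma$ is bipartite, every neighbour of $w$ lies in $Y$; by \fullcref{Gamma}{P}, $w$ has exactly one neighbour $u_i$ in each $Y_i$, so $\Gamma(w)=\{u_1,\ldots,u_n\}$ with the $u_i$ distinct. By the definition of the lexicographic product $\Lambda=\Gamma[\overline{K_k}]$, we get $\Lambda(1,w)=\{(y,u_i)\colon 1\leq y\leq k,\ 1\leq i\leq n\}$, a set of size $kn=|\Omega|$. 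This suggests the bijection $\phi\colon\Lambda(1,w)\to\Omega$ given by $(y,u_i)\mapsto(y,i)$.

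The heart of the argument is to compute, for an arbitrary element $g=\left(\prod_{i=1}^n\chi(a_i,X_i)\prod_{i=1}^n\chi(b_i,Y_i)\right)\overline{s}\in N_{(1,w)}$ with corresponding compatible triple $[\alpha,\beta,s]$ and $\beta=((b_1,\ldots,b_n),b)$, the image $(y,u_i)^g$. Applying the $\chi$-factors first: since $u_i\in Y_i$, every factor fixes $(y,u_i)$ except $\chi(b_i,Y_i)$, which by \pref{[a]v} and \pref{chidef} sends it to $(y^{b_i},u_i)$; then $\overline{s}$ sends this to $(y^{b_i},u_i^s)$ by \pref{glift}. The key step is that $u_i^s=u_{i^b}$: indeed $s$ fixes $w$ by the stabiliser condition of \cref{StabiliserInN} and is a graph automorphism, so it permutes $\Gamma(w)$, carrying the unique neighbour of $w$ in $Y_i$ to the unique neighbour of $w$ in $Y_i^s=Y_{i^b}$. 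Hence $(y,u_i)^g=(y^{b_i},u_{i^b})$, and applying $\phi$ gives $(y^{b_i},i^b)$. Recalling that the imprimitive action in \pref{EmbeddingOfL} is $(x,j)^{((b_1,\ldots,b_n),b)}=(x^{b_j},j^b)$ (consistent with $1^{d_i}=i$ and $c_{1,i}=1$ established in Phase~I), this is exactly $\phi(y,u_i)^\beta$. Thus $\phi(z^g)=\phi(z)^\beta$ for every $z\in\Lambda(1,w)$, so the permutation of $\Lambda(1,w)$ induced by $g$ transports under $\phi$ to the permutation of $\Omega$ induced by $\beta\in L$.

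To conclude, I would observe that transporting $N_{(1,w)}^{\Lambda(1,w)}$ through $\phi$ yields exactly the set $\{\beta^{\Omega}\colon[\alpha,\beta,s]\in\mathcal{C}\text{ giving an element of }N_{(1,w)}\}$. Every such $\beta$ lies in $L$ by the definition of a compatible triple, so this set is contained in $L$; conversely every $\beta\in L$ arises. For surjectivity, given $\beta=((b_1,\ldots,b_n),b)\in L$, I would apply \fullcref{Gamma}{S-Stabiliser} with $v=w$ and $\pi=b\in L^{\mathcal{B}}$ to obtain $s\in S$ with $w^s=w$, $X_i^s=X_i$ and $Y_i^s=Y_{i^b}$; then $[1_L,\beta,s]$ is a compatible triple (here $a=1$) meeting the conditions of \cref{StabiliserInN} (as $a_1=1$, so $1^{a_1}=1$), and its associated element of $N_{(1,w)}$ maps to $\beta$. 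Since $L$ acts faithfully on $\Omega$, the pair consisting of $\phi$ and the assignment $g\mapsto\beta$ is a well-defined permutation isomorphism $N_{(1,w)}^{\Lambda(1,w)}\to L$. The main obstacle is the bookkeeping in the central computation, specifically pinning down $u_i^s=u_{i^b}$ and checking that the resulting formula agrees precisely with the right wreath-product action convention fixed in Phase~I; once these are aligned, surjectivity follows cleanly from \fullcref{Gamma}{S-Stabiliser}.
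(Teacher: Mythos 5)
Your proposal is correct and follows essentially the same route as the paper's proof: the same identification of $\Lambda(1,w)$ with $\{1,\ldots,k\}\times\{1,\ldots,n\}$ via $(y,u_i)\mapsto(y,i)$, the same key computation that $s$ fixes $w$ and hence carries the unique neighbour $u_i$ of $w$ in $Y_i$ to $u_{i^b}$, yielding $(y,u_i)^g=(y^{b_i},u_{i^b})$ matching the wreath-product action of $\beta$, and the same use of \fullcref{Gamma}{S-Stabiliser} with the compatible triple $[1_L,\beta,s]$ to get the reverse containment $L\leq N_{(1,w)}^{\Lambda(1,w)}$. No gaps; the bookkeeping you flag (the convention $(p,q)^\beta=(p^{b_q},q^b)$ and the identity $u_q^s=u_{q^b}$) is handled exactly as in the paper.
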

\begin{proof}[Proof of \cref{LocalActionOfN}]
    Label the neighbours of $w$ in $\Gamma$ by $u_1,\ldots,u_n$ such that
    \begin{equation}\label{ui}
        \{u_i\} = \Gamma(w)\cap Y_i,
    \end{equation}
    
    for all $1\leq i\leq n$ (such a labelling is possible by \fullcref{Gamma}{P}). Note that
    \begin{equation}\label{NLambda(1,w)}
   \Lambda(1,w) = \{1,\ldots,k\}\times \Gamma(w) = \bigcup_{i = 1}^n \Sigma_{u_i}.
    \end{equation}

    The identification of $(i,u_j)\in \Lambda(1,w)$ with  $(i,j)\in \{1,\ldots,k\}\times \{1,\ldots,n\}$ establishes that $N_{(1,w)}^{\Lambda(1,w)}$ is permutation isomorphic to a subgroup of $\SymGrp_k\wr\SymGrp_n$ acting on $\{1,\ldots,k\}\times \{1,\ldots,n\}$ with blocks $B_j= \{(i,j)\colon 1\leq i\leq k\}$ (where $B_j$ corresponds to the bubble $\Sigma_{u_j}$). We will show that this subgroup is precisely the embedded copy of $L$ we have defined in \pref{EmbeddingOfL}.

    By \cref{StabiliserInN}, elements of the stabiliser $N_{(1,w)}$ are of the form
    \[\left(\prod_{i=1}^n\chi(a_i,X_i)\prod_{i=1}^n\chi(b_i,Y_i)\right)\overline{s},\]

     where $s\in S$, $\alpha = ((a_1,\ldots,a_n),a)$ and $\beta = ((b_1,\ldots,b_n),b)$ are elements of $L$ with $X_i^s = X_{i^a},Y_i^s = Y_{i^b}$ (i.e., $[\alpha,\beta,s]$ is a compatible triple) and $1^{a_1} = 1$, $w^s = w$.
     
     Let $(p,u_q)$ be a  neighbour of $(1,w)$ in $\Lambda$ with $1\leq p\leq k$ and $1\leq q\leq n$ (see \pref{NLambda(1,w)}). Recall that $u_q\in Y_q$ (see \pref{ui}). We have that
     \begin{equation*}
         \begin{split}
(p,u_q)^{\left(\prod_{i=1}^n\chi(a_i,X_i)\prod_{i=1}^n\chi(b_i,Y_i)\right)\overline{s}} 
&\stackrel{\pref{[a]v},\pref{chidef}}{=} (p^{b_q},u_q)^{\overline{s}} \stackrel{\pref{glift}}{=} (p^{b_q},u_q^s).
         \end{split}
     \end{equation*}

     Since $s$ fixes $w$, $s$ also preserves $\Gamma(w)$. As $u_q$ is the unique neighbour of $w$ in $Y_q$ (see \pref{ui}) and $[\alpha,\beta,s]\in\mathcal{C}$ (see \pref{compatible}), we have that
    \begin{equation}\label{uq^s=uq^b}
        \begin{split}
            \{u_q^s\} = \{u_q\}^s=(\Gamma(w)\cap Y_q)^s = \Gamma(w)^s \cap Y_q^s = \Gamma(w) \cap Y_{q^b} =\{u_{q^b}\}
        \end{split}
    \end{equation}

    and thus $u_q^s = u_{q^b}$. We also have that
     \begin{equation}\label{puq}(p,u_q)^{\left(\left(\prod_{i=1}^n\chi(a_i,X_i)\prod_{i=1}^n\chi(b_i,Y_i)\right)\overline{s}\right)^{\Lambda(1,w)}} = (p^{b_q},u_{q^b}) 
     \end{equation}

    and
    \begin{equation}\label{pqbeta}
        (p,q)^\beta = (p,q)^{((b_1,\ldots,b_n),b)} = (p^{b_q},q^b).
    \end{equation}

    Comparing \pref{puq} and \pref{pqbeta} shows that by identifying $(p,u_q)$ with $(p,q)$, the element $\left(\left(\prod_{i=1}^n\chi(a_i,X_i)\prod_{i=1}^n\chi(b_i,Y_i)\right)\overline{s}\right)^{\Lambda(1,w)}$ of $N_{(1,w)}^{\Lambda(1,w)}$ is mapped onto $\beta = ((b_1,\ldots,b_n),b)$ in $L$. By a slight abuse of notation, this shows that $N_{(1,w)}^{\Lambda(1,w)}\leq L$.
    
    To show the converse, let $\beta = ((b_1,\ldots,b_n),b)\in L$. Then $b = \beta^\mathcal{B} \in L^\mathcal{B}$. By \fullcref{Gamma}{S-Stabiliser},  there exists  $s\in S$ such that $w^s = w$, $X_i^s = X_i$ and $Y_i^s = Y_{i^b}$ for all $1\leq i\leq n$. We conclude that $[1_L,\beta,s]\in \mathcal{C}$ is a compatible triple. Moreover, by \pref{ElementsOfN}, the corresponding element of $N$ is  \[\left(\prod_{i=1}^n\chi(1_L,X_i)\prod_{i=1}^n\chi(b_i,Y_i)\right)\overline{s} = \left(\prod_{i=1}^n\chi(b_i,Y_i)\right)\overline{s}.\] Note that by \cref{StabiliserInN}, it follows that $\left(\prod_{i=1}^n\chi(b_i,Y_i)\right)\overline{s}\in N_{(1,w)}$. 

Let $1\leq p\leq k$ and $1\leq q\leq n$. Since $w^s = w$, we can repeat the argument in \pref{uq^s=uq^b}, to obtain that $u_q^s = u_{q^b}$. It follows that 
    \[(p,u_q)^{\left(\left(\prod_{i=1}^n\chi(b_i,Y_i)\right)\overline{s}\right)^{\Lambda(1,w)}} = (p^{b_q},u_{q^b}).\]

    Since, $(p,q)^\beta = (p^{b_q},q^b)$, it follows that (the embedded copy of) $N_{(1,w)}^{\Lambda(1,w)}$ contains $\beta$, which proves that $N_{(1,w)}^{\Lambda(1,w)}\geq L$, concluding the proof.
\end{proof}

\[\textbf{Phase V: The group $D$}\]

Recall that by \cref{Gamma}, $\Gamma$ admits an automorphism $f$ of order $2$ such that $G = \langle S,f\rangle$ and $X_i^f = Y_i$, $Y_i^f = X_i$ for all $1\leq i \leq n$. We next define a new group
\begin{equation}\label{Ntildedef}
    D\coloneqq \langle N,\overline{f}\rangle.
\end{equation}

\begin{claim}\label{Ntildeprop}
   $(\Lambda,D)$ is a locally-$L$ pair.
\end{claim}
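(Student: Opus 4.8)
The plan is to verify the three defining properties of a locally-$L$ pair for $(\Lambda,D)$: that $\Lambda$ is connected, that $D$ is vertex-transitive, and that $D_{(1,w)}^{\Lambda(1,w)}$ is permutation isomorphic to $L$. Connectivity is immediate, since $\Lambda=\Gamma[\overline{K_k}]$ is a lexicographic product of the connected graph $\Gamma$ with a nonempty graph. The structural backbone of the argument is the observation that $\overline{f}$ normalises $N$; granting this, $\overline{f}^2=\overline{f^2}=1$ gives $D=N\langle\overline{f}\rangle=N\cup N\overline{f}$ with $[D:N]\le 2$, and the remaining two properties can then be analysed through this coset decomposition.

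To see that $\overline{f}$ normalises $N$, I would conjugate a generic element of $N$ attached to a compatible triple $[\alpha,\beta,s]$, with $\alpha=((a_1,\ldots,a_n),a)$ and $\beta=((b_1,\ldots,b_n),b)$, by $\overline{f}$. Using \pref{chiconjbyg} together with $X_i^f=Y_i$ and $Y_i^f=X_i$ from \fullcref{Gamma}{flip}, each factor $\chi(a_i,X_i)$ becomes $\chi(a_i,Y_i)$ and each $\chi(b_i,Y_i)$ becomes $\chi(b_i,X_i)$, while $\overline{s}^{\overline{f}}=\overline{s^f}$ with $s^f\in S$ by \fullcref{Gamma}{S-f}. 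A short computation of how $s^f$ permutes the partition sets (namely $X_i^{s^f}=X_{i^b}$ and $Y_i^{s^f}=Y_{i^a}$) shows that the conjugate is precisely the element of $N$ attached to the compatible triple $[\beta,\alpha,s^f]$. Hence $\overline{f}$ normalises $N$.

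The main work is vertex-transitivity, and the key subtlety is transitivity within a single bubble. I would first record that $S$ is transitive on each of $X$ and $Y$: since $(\Gamma,G)$ is locally-$L^{\mathcal{B}}$, the group $G$ is vertex-transitive, and as $f$ interchanges $X$ and $Y$ while $G=\langle S,f\rangle$ with $S$ fixing $X$ and $Y$ setwise, a vertex $v\in Y$ has $v^G=v^S\cup(v^f)^S$ with $v^S\subseteq Y$, forcing $v^S=Y$ (and symmetrically for $X$). Next, $N$ is transitive on the $Y$-bubbles $\{\Sigma_v:v\in Y\}$: by \cref{CompatibleTriples} every $s\in S$ occurs in some compatible triple, so the bubble-level action of $N$ realises all of $S$, which is transitive on $Y$. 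For transitivity inside a fixed $Y$-bubble I would avoid acting on $\Sigma_w$ directly and instead use the neighbouring bubble $\Sigma_{u_1}$ of $(1,w)$: by \cref{LocalActionOfN}, $N_{(1,w)}$ acts on $\Lambda(1,w)$ as $L$, so the elements fixing $\Sigma_{u_1}$ setwise induce $L_{\{B_1\}}^{B_1}$, which is transitive because $B_1$ is a block of the transitive group $L$. The standard block-transitivity principle then yields that $N$ is transitive on $\{1,\ldots,k\}\times Y$. Conjugating by $\overline{f}$, which maps this set onto $\{1,\ldots,k\}\times X$ and normalises $N$, gives transitivity on the $X$-side as well, and since $\overline{f}$ interchanges the two sides, $D$ is transitive on $V\Lambda$. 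I expect this within-bubble step to be the main obstacle, and the device of relocating to a neighbouring bubble is what makes it go through cleanly, bypassing the difficulty of producing suitable elements of $S$ that fix $w$ while permuting the $X_i$.

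Finally, for the local action I would argue that passing from $N$ to $D$ does not enlarge the stabiliser of $(1,w)$. Every element of $N$ preserves the partition of $V\Lambda$ into the $X$-side and the $Y$-side, because its $\chi$-factors act within bubbles and its $\overline{s}$-factor has $s\in S$ fixing $X$ and $Y$ setwise. Consequently $N$ cannot map $(1,w)$, which lies on the $X$-side, to any vertex on the $Y$-side; but for $g\in N$ the element $g\overline{f}$ sends $(1,w)$ to a $Y$-side vertex, so no element of $N\overline{f}$ fixes $(1,w)$. Therefore $D_{(1,w)}=N_{(1,w)}$, and \cref{LocalActionOfN} gives that $D_{(1,w)}^{\Lambda(1,w)}=N_{(1,w)}^{\Lambda(1,w)}$ is permutation isomorphic to $L$. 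Together with connectivity and vertex-transitivity, this shows that $(\Lambda,D)$ is locally-$L$.
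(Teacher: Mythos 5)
Your proof is correct and takes essentially the same route as the paper's: the identical conjugation computation showing $\overline{f}$ normalises $N$ via the compatible triple $[\beta,\alpha,s^f]$, the decomposition $D=N\rtimes\langle\overline{f}\rangle$ with $D_{(1,w)}=N_{(1,w)}$ justified by $N$ preserving the two sides while $\overline{f}$ swaps them, and vertex-transitivity assembled from \cref{CompatibleTriples} and \cref{LocalActionOfN}. Your minor variations --- proving $S$-transitivity on each of $X$ and $Y$ separately and routing within-bubble transitivity through the block stabiliser $L_{\{B_1\}}^{B_1}$, where the paper instead uses transitivity of $L$ on all of $\Lambda(1,w)$ together with transitivity of $G=\langle S,f\rangle$ on the bubble system $\Sigma$ --- are sound but cosmetic.
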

\begin{proof}[Proof of \cref{Ntildeprop}]
     We first show that $\overline{f}$ normalises $N$. By \pref{ElementsOfN}, a typical element of $N$ is induced by a compatible triple $[\alpha,\beta,s]\in\mathcal{C}$ (see \pref{compatible}) and is of the form $\left(\prod_{i=1}^n\chi(a_i,X_i)\prod_{i=1}^n\chi(b_i,Y_i)\right)\overline{s}$ with $s\in S$, $\alpha = ((a_1,\ldots,a_n),a)$, $\beta = ((b_1,\ldots,b_n),b)\in L$ satisfying $X_i^s = X_{i^a}, Y_i^s = Y_{i^b}$.

    Note that 
    \begin{equation}\label{Nconjbyf}
        \begin{split}
&\left(\left(\prod_{i=1}^n\chi(a_i,X_i)\prod_{i=1}^n\chi(b_i,Y_i)\right)\overline{s}\right)^{\overline{f}} = \left(\prod_{i=1}^n\chi(a_i,X_i)^{\overline{f}}\prod_{i=1}^n\chi(b_i,Y_i)^{\overline{f}}\right)\overline{s}^{\overline{f}} \\
&\stackrel{\pref{chiconjbyg}}{=} \left(\prod_{i=1}^n\chi(a_i,X_i^f)\prod_{i=1}^n\chi(b_i,Y_i^f)\right)\overline{s^f} =\left(\prod_{i=1}^n\chi(a_i,Y_i)\prod_{i=1}^n\chi(b_i,X_i)\right)\overline{s^f} \\
&\stackrel{\pref{chicommuting}}{=} \left(\prod_{i=1}^n\chi(b_i,X_i)\prod_{i=1}^n\chi(a_i,Y_i)\right)\overline{s^f} 
        \end{split}
    \end{equation}

    By \fullcref{Gamma}{S-f}, $s^f\in S$. Moreover, by properties of $f$ (summarised in \fullcref{Gamma}{flip}), we have that 
    \[X_i^{s^f} = ((X_i)^{f^{-1}})^{sf} = (Y_i^s)^f = (Y_{i^b})^f = X_{i^b},\]
    and
    \[Y_i^{s^f} = ((Y_i)^{f^{-1}})^{sf} = (X_i^s)^f = (X_{i^a})^f = Y_{i^a}.\]

    Hence, $[\beta,\alpha,s^f]$ is a compatible triple. Moreover, the corresponding element of $N$ it induces is precisely the element computed in \pref{Nconjbyf}, as per \pref{ElementsOfN}. This concludes the proof that $\overline{f}$ normalises $N$.

    Note that $\overline{f}$ is not in $N$ since, for example, $N$ preserves the sets $\{1,\ldots,k\}\times X$ and $\{1,\ldots,k\}\times Y$, while $\overline{f}$ swaps them. Since $f$ is an involution, so is $\overline{f}$ (see \pref{glift}). It follows from \pref{Ntildedef} that \[D = \langle N,\overline{f}\rangle = N\rtimes \langle \overline{f}\rangle, \]

    with the group $\langle \overline{f}\rangle$ acting semiregularly on $V\Lambda$. Note that this implies that the corresponding point-stabilisers inside $N$ and $D$ are equal i.e.,
    \begin{equation}\label{SameStabilisers}
        D_{(1,w)} = N_{(1,w)}.
    \end{equation}
    
    Hence, by \cref{LocalActionOfN}, it follows that 
    \begin{equation}\label{LocalActionOfNTilde}
        D_{(1,w)}^{\Lambda(1,w)} = N_{(1,w)}^{\Lambda(1,w)} \cong L.
    \end{equation}
     It remains to show that $D$ is transitive on $V\Lambda$. Note that, by \cref{LocalActionOfN},  $N_{(1,w)}^{\Lambda(1,w)}$ is permutation isomorphic to $L$ and, since $L$ is transitive, $N_{(1,w)}$ and thus also $D$ is transitive on  $\Sigma_{u_j}\subseteq \bigcup_{i=1}^n \Sigma_{u_i} = \Lambda(1,w)$ for all $1\leq j \leq n$ (see \pref{NLambda(1,w)}). It now suffices to show that $D$ is transitive on $\Sigma$, the set of all bubbles. Since $D$ is a subgroup ${\SymGrp_k}\wr G$ inside $\Aut(\Lambda)$, $\Sigma$ is in fact a block system for the action of $D$. An element of $N$ induced by the compatible triple $[\alpha,\beta,s]\in\mathcal{C}$ acts on  $\Sigma_v$ with $v\in V$ by
        \begin{equation}\label{Sigmav^N}
\Sigma_v^{\left(\prod_{i=1}^n\chi(a_i,X_i)\prod_{i=1}^n\chi(b_i,Y_i)\right)\overline{s}} =\Sigma_v^{\left(\prod_{i=1}^n\chi(b_i,Y_i)\right)\overline{s}} = \Sigma_v^{\overline{s}} = \Sigma_{v^s},
        \end{equation}
        
        while 
        \begin{equation}\label{Sigmav^f}
            \Sigma_v^{\overline{f}} = \Sigma_{v^f}.
        \end{equation}

    By \cref{CompatibleTriples}, every $s\in S$ occurs in a compatible triple in $\mathcal{C}$ (see \pref{compatible}), which then induces an element of $N$ (see \pref{ElementsOfN}). Since by \fullcref{Gamma}{S-f}, $G = \langle S,f\rangle$ and $G$ is transitive on $V\Gamma$, we conclude that $D$ is transitive on the bubbles of $\Sigma$, as desired. Therefore, $\Lambda$ is $D$-vertex-transitive and $(\Lambda,D)$ is a locally-$L$ pair.
\end{proof} 

\vspace{-5mm}

\[\textbf{Phase VI: The group $M$}\]

    For a vertex $v\in V\Gamma$ define
    \begin{equation}\label{Mvdef}
        M_v \coloneqq \{ [a]_v \colon (a,1,\ldots,1)\in L_{(\Omega\setminus B_1)}\}.
    \end{equation}

    Note that $M_v$ is a subgroup of $\Aut(\Lambda)$ isomorphic to $L_{(\Omega\setminus B_1)}$. Moreover, $M_v$ induces $L_{(\Omega\setminus B_1)}$ on the bubble $\Sigma_v$ and acts trivially on all other bubbles $\Sigma_u$ with $u\neq v$. From \pref{[a]v} and \pref{[a]vrelations}, we see that $M_u\cap M_v = 1$ and $[M_v,M_u] = 1$ for all $u,v\in V\Gamma$ with $u\neq v$. In particular, we have the following:
    \begin{equation}\label{Mdef}
        M\coloneqq\langle M_v \colon v\in V\Gamma\rangle = \prod_{v\in V\Gamma}M_v.
    \end{equation}
\[\textbf{Phase VII: The group $H$}\]
    We define a new group.
    \begin{equation}\label{Hdef}
        H\coloneqq \langle M,D\rangle.
    \end{equation}

    \begin{claim}\label{MnormalinH}
        $M$ is a normal subgroup of $H$.
    \end{claim}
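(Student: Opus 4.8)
The plan is to show that $M$ is normalised by every generator of $H=\langle M,D\rangle$. Since $M$ is normalised by itself, and $D=\langle N,\overline{f}\rangle$, it suffices to check that both $N$ and $\overline{f}$ normalise $M$. Because $M=\prod_{v\in V\Gamma}M_v$ (see \pref{Mdef}), it is in turn enough to show that conjugation by a generator of $D$ permutes the factors $M_v$ among themselves, i.e.\ sends $M_v$ into $M_{v'}$ for some $v'\in V\Gamma$.

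First I would handle conjugation by $\overline{f}$. A typical generator of $M_v$ is $[a]_v$ with $(a,1,\ldots,1)\in L_{(\Omega\setminus B_1)}$ (see \pref{Mvdef}). By the conjugation rule \pref{[a]vconjbyg}, we have $[a]_v^{\overline{f}}=[a]_{v^f}$, which again lies in $M_{v^f}$ (the defining condition on $a$ is unchanged, as it only constrains the element $a\in\SymGrp_k$, not the vertex). Hence $M_v^{\overline{f}}=M_{v^f}\leq M$, so $\overline{f}$ normalises $M$.

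The main work is conjugation by an element of $N$. Fix a generator $[a]_v$ of $M_v$ and an element $\eta=\bigl(\prod_{i=1}^n\chi(a_i',X_i)\prod_{i=1}^n\chi(b_i',Y_i)\bigr)\overline{s}$ of $N$ coming from a compatible triple $[\alpha,\beta,s]$ (see \pref{ElementsOfN}). I would compute $[a]_v^{\eta}$ by conjugating in stages. Conjugation by $\overline{s}$ uses \pref{[a]vconjbyg} to give $[a]_v^{\overline{s}}=[a]_{v^s}$. Conjugation by the $\chi$-factors is where the key obstacle lies: I must check that conjugating $[a]_v$ by $\chi(c,P)=\prod_{u\in P}[c]_u$ produces another automorphism lying in some $M_{v'}$. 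The factors $[c]_u$ with $u\neq v$ commute with $[a]_v$ by \pref{[a]vrelations}, so only the factor supported on the bubble $\Sigma_v$ can interact with $[a]_v$; this reduces the computation to conjugation inside the single symmetric group $\SymGrp_k$ acting on $\Sigma_v$, replacing $a$ by a conjugate $a^{c}$ (where $c$ is the relevant entry $c_i'$ determined by which part $P$ contains $v$). The crux is then to verify that this conjugate $a^{c}$ still satisfies the defining membership condition of \pref{Mvdef}, namely that $(a^{c},1,\ldots,1)\in L_{(\Omega\setminus B_1)}$.

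I expect this last verification to be the main obstacle, and it is the step where the structure of $L$ from Phase~I must be invoked. The point is that $L_{(\Omega\setminus B_1)}$ is a normal subgroup of the setwise stabiliser $L_{\{B_1\}}$, and the conjugating elements arising from $\eta$ act on the bubble $\Sigma_v$ via elements of $L_{\{B\}}^B$ (the local action that produced the embedding \pref{EmbeddingOfL}). Thus $a^{c}$ remains of the required form, and conjugation by $\eta$ sends $M_v$ to $M_{v^s}$ (possibly after relabelling via the block permutation), again a factor of $M$. Assembling the two cases, $N$ and $\overline{f}$ both normalise $M$, hence so does $D$, and therefore $M\trianglelefteq\langle M,D\rangle=H$.
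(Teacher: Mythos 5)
Your proposal is correct and follows essentially the same route as the paper: reduce to showing that $N$ and $\overline{f}$ normalise $M$, compute $[a]_v^{\eta}=[a^{a_j}]_{v^s}$ via \pref{[a]vrelations} and \pref{[a]vconjbyg}, and then verify that the conjugated entry still defines an element of $L_{(\Omega\setminus B_1)}$. The only difference is presentational: where you invoke $L_{(\Omega\setminus B_1)}\trianglelefteq L_{\{B_1\}}$ together with the fact that the coordinates $a_j$ lie in $L_{\{B\}}^B$, the paper realises exactly this by conjugating $(c,1,\ldots,1)$ by the explicit element $\ell_j\alpha(\ell_{j^a})^{-1}$ (which stabilises $B_1$ setwise) and appealing to normality of $K$ in $L$ --- the same mechanism in concrete form.
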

    \begin{proof}[Proof of \cref{MnormalinH}]
        Since $H$ is generated by $M$ and $D$ (see \pref{Hdef}), and $D$ is generated by $N$ and $\overline{f}$ (see \pref{Ntildedef}), it suffices to show that $M$ is normalised by $N$ and $\overline{f}$.   A typical generator of $M$ is of the form $[c]_v$ with $(c,1,\ldots,1)\in L_{(\Omega\setminus B_1)}$ and $v\in V\Gamma$. By \pref{[a]vconjbyg}, it is clear that \[[c]_v^{\overline{f}} = [c]_{v^f} \in M_{v^f}\leq M.\] Hence, $\overline{f}$ normalises $M$.      Let 
        \[\left(\prod_{i=1}^n\chi(a_i,X_i)\prod_{i=1}^n\chi(b_i,Y_i)\right)\overline{s}\]

     be an arbitrary element of $N$ corresponding to a compatible triple $[\alpha,\beta,s]$ with $\alpha = ((a_1,\ldots,a_n),a)$ and $\beta = ((b_1,\ldots,b_n),b)$ elements of $L$.

     Suppose first that $v\in X_j$ for some $1\leq j\leq n$. Then we have the following:
     \begin{equation}\label{CvConjByN}
         \begin{split}
    [c]_v^{\left(\prod_{i=1}^n\chi(a_i,X_i)\prod_{i=1}^n\chi(b_i,Y_i)\right)\overline{s}}& \stackrel{\pref{[a]vrelations},\pref{chidef}}{=} ([c]_v^{[a_j]_v})^{\left(\prod_{i=1}^n\chi(b_i,Y_i)\right)\overline{s}}  \\
    &\stackrel{\pref{[a]vrelations},\pref{chidef}}{=}  {[c^{a_j}]_v}^{\left(\prod_{i=1}^n\chi(b_i,Y_i)\right)\overline{s}} \\
    &\stackrel{\pref{[a]vrelations}}{=}{[c^{a_j}]_v}^{\overline{s}} \\
    &\stackrel{\pref{[a]vconjbyg}}{=} [c^{a_j}]_{v^s}.
         \end{split}
     \end{equation}

     Note that to show the element obtained in \pref{CvConjByN} is in $M$, it suffices to show that $(c^{a_j},1\ldots,1)\in L_{(\Omega\setminus B_1)}$ (see \pref{Mvdef} and \pref{Mdef}). 
     \begin{equation}\label{ConjSteps}
        \begin{split}
             (c,1,\ldots,1)^{\ell_j\alpha(\ell_{j^a})^{-1}} &\stackrel{\pref{hishift}}{=} (1,\ldots,1,\underbrace{c}_{\text{ entry $j$}},1,\ldots,1)^{((a_1,\ldots,a_n),a)(\ell_{j^a})^{-1}} \\
             &\stackrel{\pref{hiconjugation}}{=}(1,\ldots,1,\underbrace{c^{a_j}}_{\text{ entry $j^a$}},1,\ldots,1)^{(\ell_{j^a})^{-1}}\\
             &\stackrel{\pref{hishift}}{=}(c^{a_j},1,\ldots,1).
        \end{split}
    \end{equation}

    As $(c,1,\ldots,1)\in L_{(\Omega\setminus B_1)}\leq K$ and $K$ is a normal subgroup of $L$, we conclude by \pref{ConjSteps} that $(c^{a_j},1\ldots,1)\in L_{(\Omega\setminus B_1)}$. Hence, the element calculated in \pref{CvConjByN} is indeed in $M$. 
    
    If instead $v\in Y_j$ for some $1\leq j\leq n$, analogous computations to \pref{CvConjByN} and \pref{ConjSteps} can be performed with $\beta = ((b_1,\ldots,b_n),b)$ instead of $\alpha = ((a_1,\ldots,a_n),a)$. Hence, $M$ is normal in $H$, as desired.
    \end{proof}

    By \cref{MnormalinH}, we can rewrite the group $H$ as
    \begin{equation}\label{H=MNtilde}
        H = \langle M,D\rangle = MD.
    \end{equation}
    
    \begin{claim}\label{StabiliserInH}
        $H_{(1,w)} = M_{(1,w)}D_{(1,w)} =M_{(1,w)}N_{(1,w)} $.
    \end{claim}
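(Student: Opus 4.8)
The plan is to establish the two equalities by first reducing the second one, $M_{(1,w)}D_{(1,w)}=M_{(1,w)}N_{(1,w)}$, to the identity $D_{(1,w)}=N_{(1,w)}$ already recorded in \pref{SameStabilisers}, and then focusing on the set equality $H_{(1,w)}=M_{(1,w)}N_{(1,w)}$. The inclusion $\supseteq$ is immediate, since $M_{(1,w)}\leq M\leq H$ and $N_{(1,w)}\leq N\leq D\leq H$ both fix $(1,w)$. For the reverse inclusion I would take an arbitrary $h\in H_{(1,w)}$ and, using $H=MD$ from \pref{H=MNtilde} together with the decomposition $D=N\rtimes\langle\overline f\rangle$ obtained in \cref{Ntildeprop}, write $h=md$ with $m\in M$ and $d\in D$. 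The first point is that in fact $d\in N$: since $(1,w)$ lies in the bubble $\Sigma_w$ with $w\in X\subseteq V\Gamma$, the image $\overline h$ of $h$ under the projection ${\SymGrp_k}\wr G\to G$ fixes $w$, so $\overline h\in G_w$; because $f$ interchanges $X$ and $Y$ on $V\Gamma$ (\fullcref{Gamma}{flip}) and generates $G/S$, every element of $G$ fixing a vertex of $X$ lies in $S$, whence $G_w\leq S$. As $\overline N=S$ (by \cref{CompatibleTriples}) while elements of $N\overline f$ project into the nontrivial coset $Sf$, and $\overline d=\overline h\in S$, the factor $d$ must lie in $N$.

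The heart of the argument is a rewriting trick that converts the part of $m$ localised on the bubble $\Sigma_w$ into a product of an element of $N$ and elements of $M$ supported away from $w$. Writing $M=\prod_{v\in V\Gamma}M_v$ as in \pref{Mdef}, I decompose $m=m_wm'$ with $m_w\in M_w$ and $m'\in M':=\prod_{v\neq w}M_v$; by \pref{Mvdef} we have $m_w=[\sigma]_w$ for some $\sigma\in\SymGrp_k$ with $(\sigma,1,\ldots,1)\in L_{(\Omega\setminus B_1)}$. The key identity, immediate from \pref{chidef} and the commuting relations \pref{[a]vrelations}, is $[\sigma]_w=\chi(\sigma,X_1)\prod_{u\in X_1\setminus\{w\}}[\sigma^{-1}]_u$. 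Here $\chi(\sigma,X_1)\in N$, being the element of $N$ induced by the compatible triple $[\alpha,1_L,1_S]$ with $\alpha=(\sigma,1,\ldots,1)\in L_{(\Omega\setminus B_1)}\leq L$ (see \pref{ElementsOfN}), while $\prod_{u\in X_1\setminus\{w\}}[\sigma^{-1}]_u$ lies in $M'$ because $(\sigma^{-1},1,\ldots,1)\in L_{(\Omega\setminus B_1)}$.

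Substituting this into $h=m_wm'd$ yields $h=\chi(\sigma,X_1)\,m''\,d$ for some $m''\in M'$. I would then move $m''$ to the left past $\chi(\sigma,X_1)$: since $M\trianglelefteq H$ by \cref{MnormalinH} and $\chi(\sigma,X_1)$ has trivial $S$-component, the conjugate $\tilde m'':=\chi(\sigma,X_1)\,m''\,\chi(\sigma,X_1)^{-1}$ still lies in $M'$, because the computation in \pref{CvConjByN} and \pref{ConjSteps} shows that $\chi(\sigma,X_1)$ fixes every bubble setwise and normalises each $M_v$ with $v\neq w$. Setting $d^*:=\chi(\sigma,X_1)\,d\in N$, we obtain $h=\tilde m''\,d^*$ with $\tilde m''\in M'\leq M_{(1,w)}$ (its support avoids $\Sigma_w$, so it fixes $(1,w)$). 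Finally, since $\tilde m''$ and $h$ both fix $(1,w)$, so does $d^*=(\tilde m'')^{-1}h$, whence $d^*\in N\cap H_{(1,w)}=N_{(1,w)}$. This exhibits $h\in M_{(1,w)}N_{(1,w)}$ and closes the reverse inclusion.

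The step I expect to be the main obstacle is precisely this rewriting identity and its application: the generators $\chi(a,X_i)$ of the ``bipartite'' group $N$ act simultaneously on every bubble of the part $X_i$, whereas the generators $[a]_v$ of $M$ are localised to a single bubble, so one cannot correct the displacement of $(1,w)$ caused by $m_w$ by a single element of $N$ without introducing a residual action on the remaining bubbles of $X_1$. The identity absorbs exactly that residue into $M'$, and it is available only because the hypothesis $L_{(\Omega\setminus B_1)}\neq 1$ guarantees that $\sigma$ and $\sigma^{-1}$ yield legitimate elements both of $M$ and, via $\chi(\sigma,X_1)$, of $N$. One must also check that $\tilde m''$ genuinely remains in $M'$ rather than spilling onto $\Sigma_w$, which is exactly where the triviality of the $S$-component of $\chi(\sigma,X_1)$ (so that it preserves each bubble) enters.
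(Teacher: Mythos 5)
Your proof is correct, but it takes a genuinely different route from the paper's. The paper argues by counting: using $H=MD$, the Orbit--Stabiliser lemma and transitivity of $D$, it shows $|M_{(1,w)}D_{(1,w)}| = |H_{(1,w)}|\,|(1,w)^{M\cap D}|/|(1,w)^M|$, thereby reducing the claim to the orbit equality $(1,w)^{M\cap D}=(1,w)^M$, which it witnesses by the diagonal element $z=\prod_{v\in X}[a]_v\in M\cap N$ arising from the compatible triple $[(a,\ldots,a),1_L,1_S]$. You instead give a direct element-wise factorisation: writing $h=md\in H_{(1,w)}$, you use the rewriting identity $[\sigma]_w=\chi(\sigma,X_1)\prod_{u\in X_1\setminus\{w\}}[\sigma^{-1}]_u$ to transfer the bubble-$\Sigma_w$ component of $m$ into $N$, absorb the residue into $M'=\prod_{v\neq w}M_v$, and regroup. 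The two arguments hinge on the same underlying phenomenon --- exhibiting elements of $M\cap N$ that realise the full $L_{(\Omega\setminus B_1)}$-action on $\Sigma_w$; indeed your $\chi(\sigma,X_1)=\prod_{v\in X_1}[\sigma]_v$ lies in $M\cap N$ just as the paper's $z$ does, and either element would serve in either proof. Your version is constructive and avoids the order computation, at the cost of some bookkeeping (the projection argument showing $d\in N$, and checking that conjugation by $\chi(\sigma,X_1)$ keeps $\tilde m''$ in $M'$ --- though for the latter, since $\chi(\sigma,X_1)\in M$ and $M$ is the direct product \pref{Mdef}, conjugation preserves each factor $M_v$ automatically, which is simpler than invoking \pref{CvConjByN} and \pref{ConjSteps}). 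The paper's counting sidesteps all such verifications. One streamlining of your argument: the projection step showing $d\in N$ is dispensable --- keep $d\in D$, note $d^*=\chi(\sigma,X_1)d\in D$ since $N\leq D$, conclude $d^*\in D_{(1,w)}$, and then invoke \pref{SameStabilisers} to get $d^*\in N_{(1,w)}$.
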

    \begin{proof}[Proof of \cref{StabiliserInH}]
        By \pref{SameStabilisers}, we know that $D_{(1,w)}  = N_{(1,w)}$, hence it follows that $M_{(1,w)}D_{(1,w)} =M_{(1,w)}N_{(1,w)}$. Moreover, by \pref{H=MNtilde} it is also clear that $M_{(1,w)}D_{(1,w)}\leq H_{(1,w)}$. Hence, to prove the claim, it suffices to show that $M_{(1,w)}D_{(1,w)}\geq H_{(1,w)}$. We will do this by showing that the groups $M_{(1,w)}D_{(1,w)}$ and $H_{(1,w)}$ are of the same order. Combining \pref{H=MNtilde} with the Orbit-Stabiliser lemma and the fact that $H$ is transitive on $V\Lambda$ (since $D$ is transitive on $V\Lambda$ by \cref{Ntildeprop}), we obtain the following
        \begin{equation}\label{OrderOfH1w}
            |H_{(1,w)}| = \frac{|H|}{|V\Lambda|} = \frac{|MD|}{|V\Lambda|} = \frac{|M||D|}{|M\cap D||V\Lambda|}.
        \end{equation}

        By several applications of the Orbit-Stabiliser lemma, and by the fact that $D$ is transitive on $V\Lambda$, we also have that
        \begin{equation}\label{OrderOfM1wN1w}
            \begin{split}
                |M_{(1,w)}D_{(1,w)}| &= \frac{|M_{(1,w)}||D_{(1,w)}|}{|M_{(1,w)}\cap D_{(1,w)}|} = \frac{|M_{(1,w)}||D_{(1,w)}|}{|(M\cap D)_{(1,w)}|}  = \\
                &=\frac{\frac{|M|}{|(1,w)^M|}\frac{|D|}{|(1,w)^{D}|}}{\frac{|M\cap D|}{|(1,w)^{M\cap D}|}} =\frac{|M||D|}{|M\cap D||V\Lambda|}\frac{|(1,w)^{M\cap D}|}{|(1,w)^M|}.
            \end{split}
        \end{equation}

        Combining \pref{OrderOfM1wN1w} with \pref{OrderOfH1w}, we have that
        \[ |M_{(1,w)}D_{(1,w)}| = |H_{(1,w)}|\frac{|(1,w)^{M\cap D}|}{|(1,w)^M|}.\]

        Hence, to finish the proof, it suffices to show that $|(1,w)^{M\cap D}| = |(1,w)^M|$. Since $M\cap D\leq M$, we in fact already know that \[(1,w)^{M\cap D} \subseteq (1,w)^M = (1,w)^{M_w} = \{(1^a,w) \colon (a,1,\ldots,1)\in L_{(\Omega\setminus B_1)}\}.\] We need to establish the reverse containment.

       Let $(a,1,\ldots,1)\in L_{(\Omega\setminus B_1)}$. Then by \pref{hishift}, we have that
       \[(a,\ldots,a) = \prod_{i=1}^n(a,1,\ldots,1)^{\ell_i} \in K \leq L.\]

       Note that $[(a,\ldots,a),1_L,1_S]\in \mathcal{C}$ is a compatible triple (see \pref{compatible}), which by \pref{ElementsOfN} induces the following element of $N$
       \begin{equation}\label{z}
           z \coloneqq \left(\prod_{i=1}^n\chi(a,X_i)\prod_{i=1}^n\chi(1_L,Y_i)\right)\overline{1_S} = \prod_{i=1}^n\chi(a,X_i) \stackrel{\pref{chidef}}{=} \prod_{v\in X}[a]_v\in N.
       \end{equation}

       From its definition, it is clear that $z \in M$ (see \pref{Mvdef} and \pref{Mdef}). Hence, $z \in M\cap N\leq M\cap D$ (since $N\leq D$ by \pref{Ntildedef}), and since $w\in X_1\subseteq X$, we have that
        \[(1^a,w) = (1,w)^z \in (1,w)^{M\cap D},\]
        as required.
        \end{proof}
 
Recall from \pref{NLambda(1,w)} that $\Lambda(1,w) = \bigcup_{i=1}^n\Sigma_{u_i}$ with $\Gamma(w) = \{u_1,\ldots,u_n\}$.

    \begin{claim}\label{LocMwinsideLocNw}
        $M_{(1,w)}^{\Lambda(1,w)}\leq N_{(1,w)}^{\Lambda(1,w)}$
    \end{claim}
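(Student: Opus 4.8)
The plan is to compute the local action $M_{(1,w)}^{\Lambda(1,w)}$ explicitly, using the same identification of $\Lambda(1,w)$ with $\{1,\ldots,k\}\times\{1,\ldots,n\}$ that appeared in the proof of \cref{LocalActionOfN}, and then to recognise each induced permutation as an element of the embedded copy of $L$, which by \cref{LocalActionOfN} is precisely $N_{(1,w)}^{\Lambda(1,w)}$.

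First I would unwind the structure of $M_{(1,w)}$. By \pref{Mdef}, every element of $M$ is a product $\prod_{v\in V\Gamma}[a_v]_v$ with each $(a_v,1,\ldots,1)\in L_{(\Omega\setminus B_1)}$, and by \pref{[a]v} the factor $[a_w]_w$ is the only one affecting the bubble $\Sigma_w$ containing $(1,w)$; hence such an element lies in $M_{(1,w)}$ exactly when $1^{a_w}=1$. Since $w\in X$ while the neighbours $u_1,\ldots,u_n$ of $w$ lie in $Y$, the factor $[a_w]_w$ acts trivially on $\Lambda(1,w)=\bigcup_{i=1}^n\Sigma_{u_i}$ (see \pref{NLambda(1,w)}), so the stabiliser condition $1^{a_w}=1$ is irrelevant on the neighbourhood and the local action depends only on the factors $[a_{u_i}]_{u_i}$. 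Using \pref{[a]v}, an element of $M_{(1,w)}$ sends $(p,u_q)\mapsto(p^{a_{u_q}},u_q)$, where, because $M=\prod_{v}M_v$ is a direct product over the distinct vertices $v$, the constituents $a_{u_1},\ldots,a_{u_n}$ range independently over all $a\in\SymGrp_k$ with $(a,1,\ldots,1)\in L_{(\Omega\setminus B_1)}$.

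Next, under the identification of $(p,u_q)$ with $(p,q)$ from the proof of \cref{LocalActionOfN}, this permutation becomes $(p,q)\mapsto(p^{a_{u_q}},q)$, which is exactly the element $((a_{u_1},\ldots,a_{u_n}),1)$ of $\SymGrp_k\wr\SymGrp_n$. It therefore remains to verify that each such tuple actually lies in the embedded copy of $L$. For this I would invoke \pref{hishift}: for each $i$ we have $(a_{u_i},1,\ldots,1)^{\ell_i}=(1,\ldots,1,a_{u_i},1,\ldots,1)\in K\leq L$ with $a_{u_i}$ in position $i$. Since these conjugates lie in the base group with disjoint support, their product (taken inside $L$) is $((a_{u_1},\ldots,a_{u_n}),1)$ by \pref{MultiplicationInL}; as $L$ is a group, this product lies in $L$. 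Comparing with \cref{LocalActionOfN}, which identifies $N_{(1,w)}^{\Lambda(1,w)}$ with $L$ under the same identification, yields $M_{(1,w)}^{\Lambda(1,w)}\leq N_{(1,w)}^{\Lambda(1,w)}$.

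I expect the only real subtlety to be bookkeeping: making sure the stabiliser condition $1^{a_w}=1$ plays no role on the neighbourhood (because $w\notin\{u_1,\ldots,u_n\}$), and correctly matching the induced permutation $((a_{u_1},\ldots,a_{u_n}),1)$ against the embedded copy of $L$ produced in \cref{LocalActionOfN}. The key algebraic input, namely that independently chosen single-block elements of $L_{(\Omega\setminus B_1)}$ can be shifted into arbitrary coordinates via the $\ell_i$ and recombined inside $L$, is exactly what \pref{hishift} provides, so no genuinely new idea beyond the earlier phases is needed.
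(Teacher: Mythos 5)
Your proposal is correct, but it reaches the containment by a somewhat different route than the paper, so a comparison is worthwhile. The paper works generator by generator inside $\Aut(\Lambda)$: using \pref{Mdef} it reduces to the factors $M_v$, dismisses $v\notin\Gamma(w)$ since $M_v$ then acts trivially on $\Lambda(1,w)$, and for $v=u_j$ builds from the compatible triple $[1_L,(a,1,\ldots,1)^{\ell_j},1_S]$ the explicit element $\chi(a,Y_j)\in N_{(1,w)}$ (via \cref{StabiliserInN}), which agrees with $[a]_{u_j}$ on $\Lambda(1,w)$ precisely because $u_j$ is the unique neighbour of $w$ in $Y_j$; it therefore never invokes \cref{LocalActionOfN} at all. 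You instead compute the entire induced group $M_{(1,w)}^{\Lambda(1,w)}$ in one shot, recognise each induced permutation as $((a_{u_1},\ldots,a_{u_n}),1)\in\SymGrp_k\wr\SymGrp_n$, certify membership in the embedded copy of $L$ via the product $\prod_{i=1}^n(a_{u_i},1,\ldots,1)^{\ell_i}$ --- which is exactly the trick the paper itself deploys later in the proof of \cref{StabiliserInH}, where $(a,\ldots,a)=\prod_{i=1}^n(a,1,\ldots,1)^{\ell_i}$ --- and then quote \cref{LocalActionOfN}. The one delicate point, which you handle correctly by fixing the identification $(p,u_q)\leftrightarrow(p,q)$, is that the statement of \cref{LocalActionOfN} only asserts a permutation isomorphism, whereas you need the equality of $N_{(1,w)}^{\Lambda(1,w)}$ with the embedded copy of $L$ under that specific identification; this is what the proof of that claim establishes, and your argument thereby imports its converse direction (resting on \fullcref{Gamma}{S-Stabiliser}), which the paper's direct construction with $s=1_S$ avoids. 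Your remaining bookkeeping is sound: the stabiliser condition $1^{a_w}=1$ is indeed vacuous on $\Lambda(1,w)$ because $w\notin\Gamma(w)$, mirroring the paper's first case, and independence of the $a_{u_i}$ is not even needed for the inclusion. In short, both arguments pivot on \pref{hishift} and on $u_q$ being the unique neighbour of $w$ in $Y_q$; the paper's version is self-contained at the level of explicit automorphisms and slightly more economical in its dependencies, while yours is a cleaner single computation at the cost of leaning on \cref{LocalActionOfN} as a black box.
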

    
    \begin{proof}[Proof of \cref{LocMwinsideLocNw}]
        By \pref{Mdef}, it suffices to show that $(M_{v})_{(1,w)}^{\Lambda(1,w)}\leq N_{(1,w)}^{\Lambda(1,w)}$ for all $v\in V\Gamma$.
        
        \begin{case}
            $v\notin \Gamma(w)$
        \end{case}
        
        From \pref{[a]v}, we see that $M_v$ acts trivially on each bubble $\Sigma_u$ with $u\neq v$. In particular, $M_v$ acts trivially on $\Lambda(1,w)$. Hence, in this case, it follows that \[(M_{v})_{(1,w)}^{\Lambda(1,w)} = 1 \leq N_{(1,w)}^{\Lambda(1,w)}.\]

        \begin{case}
            $v\in \Gamma(w)$ 
        \end{case}

        In this case, $v = u_j$ for some $j\in \{1,\ldots,n\}$. Note that $M_{u_j}$ fixes the vertex $(1,w)$, so $(M_{u_j})_{(1,w)} = M_{u_j}$. By \pref{Mvdef}, a typical generator of $M_{u_j}$ is given by $[a]_{u_j}$ for $(a,1,\ldots,1)\in L_{(\Omega\setminus B_1)}$. By \pref{hishift}, it follows that \[(a,1,\ldots,1)^{\ell_j} = (1,\ldots,\underset{\text{$j^{th}$ entry}}{a},1,\ldots,1)\in K\leq L.\]
        
        As $[1_L,(a,1,\ldots,1)^{\ell_j},1_S]\in \mathcal{C}$ is a compatible triple (see \pref{compatible}), by \pref{ElementsOfN} and \cref{StabiliserInN}, it induces an element of $N_{(1,w)}$ of the form
        \[ \left(\prod_{i=1}^n\chi(1_L,X_i)\right)\left(\chi(1,Y_1)\cdot \ldots \cdot \chi(a,Y_j)\cdot \ldots\cdot \chi(1,Y_n)\right)\overline{1_S} = \chi(a,Y_j).\]

        Moreover, $\chi(a,Y_j)$ acts trivially on $\{1,\ldots,k\}\times Y_i$ with $i\neq j$. As $u_j$ is the unique neighbour of $w$ in $Y_j$ (recall \pref{ui}), we conclude that $[a]_{u_j}$ and $\chi(a,Y_j)$ induce the same permutation on $\Lambda(1,w)$. Hence, \[(M_{u_j})_{(1,w)}^{\Lambda(1,w)}\leq N_{(1,w)}^{\Lambda(1,w)}.\]
    \end{proof}

    We summarise the most important properties of the group $H$ in the following claim.

    \begin{claim}\label{AllAboutH}
       The pair $(\Lambda,H)$ is locally-$L$. Moreover, it holds that
        \[|H_{(1,w)}| \geq |L_{(\Omega\setminus B_1)}|^{\frac{|V\Lambda|}{k}-1} \]
    \end{claim}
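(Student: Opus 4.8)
The plan is to assemble the pieces already established, adding only one short counting argument. Both assertions follow essentially formally from \cref{StabiliserInH}, \cref{LocMwinsideLocNw}, \cref{LocalActionOfN}, \cref{Ntildeprop}, and the internal direct product structure of $M$ recorded in \pref{Mdef}.

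For the statement that $(\Lambda,H)$ is locally-$L$, I would start from \cref{StabiliserInH}, which gives $H_{(1,w)} = M_{(1,w)}N_{(1,w)}$. Restriction of the action to the neighbourhood $\Lambda(1,w)$ is a group homomorphism, so it carries this setwise product to the setwise product of the images, i.e. $H_{(1,w)}^{\Lambda(1,w)} = M_{(1,w)}^{\Lambda(1,w)}\,N_{(1,w)}^{\Lambda(1,w)}$. By \cref{LocMwinsideLocNw}, the first factor is contained in the second, so the product collapses and $H_{(1,w)}^{\Lambda(1,w)} = N_{(1,w)}^{\Lambda(1,w)}$, which is permutation isomorphic to $L$ by \cref{LocalActionOfN}. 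Since $D\leq H$ is already transitive on $V\Lambda$ by \cref{Ntildeprop}, so is $H$, and hence $(\Lambda,H)$ is a locally-$L$ pair.

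For the lower bound I would use only the trivial inclusion $M_{(1,w)}\leq H_{(1,w)}$, which gives $|H_{(1,w)}|\geq |M_{(1,w)}|$, and then compute $|M_{(1,w)}|$ from the decomposition $M=\prod_{v\in V\Gamma}M_v$ of \pref{Mdef}. Since each $M_v$ with $v\neq w$ acts trivially on the bubble $\Sigma_w$ (see \pref{[a]v}), the entire factor $M_v$ fixes $(1,w)$; only the factor $M_w$ imposes a nontrivial stabiliser condition. Thus $M_{(1,w)}=(M_w)_{(1,w)}\times\prod_{v\neq w}M_v$, whose order is at least $|L_{(\Omega\setminus B_1)}|^{|V\Gamma|-1}$ because each $M_v\cong L_{(\Omega\setminus B_1)}$ and there are $|V\Gamma|-1$ vertices distinct from $w$. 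Finally, each bubble has $k$ vertices, so $|V\Gamma|=|V\Lambda|/k$, which yields $|H_{(1,w)}|\geq |L_{(\Omega\setminus B_1)}|^{|V\Lambda|/k-1}$.

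The computations are routine; the one point warranting care — and which I regard as the main (if mild) obstacle — is the passage from the factorisation $H_{(1,w)}=M_{(1,w)}N_{(1,w)}$ to the equality of local actions. One must observe that restriction to $\Lambda(1,w)$ is a homomorphism, so that the image of a product of subgroups is the product of their images, and then invoke \cref{LocMwinsideLocNw} to absorb $M_{(1,w)}^{\Lambda(1,w)}$ into $N_{(1,w)}^{\Lambda(1,w)}$; absent that containment the product of the two restricted groups need not even be a subgroup. Everything else is bookkeeping with the direct product structure of $M$ and the bubble size $k$.
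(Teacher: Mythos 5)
Your proof is correct and follows essentially the same route as the paper: it derives $H_{(1,w)}^{\Lambda(1,w)} = N_{(1,w)}^{\Lambda(1,w)} \cong L$ from \cref{StabiliserInH}, \cref{LocMwinsideLocNw} and \cref{LocalActionOfN}, inherits vertex-transitivity from $D$ via \cref{Ntildeprop}, and bounds $|H_{(1,w)}|$ below by $|M_{(1,w)}| \geq |L_{(\Omega\setminus B_1)}|^{|V\Gamma|-1}$ with $|V\Gamma| = |V\Lambda|/k$. Your explicit decomposition $M_{(1,w)} = (M_w)_{(1,w)}\times\prod_{v\neq w}M_v$ is a slightly more detailed justification of the inequality the paper states directly, but the argument is the same.
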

    \begin{proof}[Proof of \cref{AllAboutH}]
        From \pref{H=MNtilde}, we see that $H$ contains $D$ as a subgroup. By \cref{Ntildeprop}, we know that $D$ is vertex-transitive on $\Lambda$. Therefore, $H$ also is. We calculate the local action of $H$.
        \begin{equation*}
            \begin{split}
                H_{(1,w)}^{\Lambda(1,w)} & \stackrel{\cref{StabiliserInH}}{=} (M_{(1,w)}N_{(1,w)})^{\Lambda(1,w)} = M_{(1,w)}^{\Lambda(1,w)}N_{(1,w)}^{\Lambda(1,w)}  \\
                &\stackrel{\cref{LocMwinsideLocNw}}{=}N_{(1,w)}^{\Lambda(1,w)} \\
                &\stackrel{\cref{LocalActionOfN}}{\cong} L.
            \end{split}
        \end{equation*}

        Therefore, $(\Lambda,H)$ is a locally-$L$ pair.        Finally, we have the following lower bound on the order of a point-stabiliser in $H$.
        \begin{equation}\label{LowerBound}
            \begin{split}
                 |H_{(1,w)}| &\stackrel{\cref{StabiliserInH}}{=} |M_{(1,w)}D_{(1,w)}| \geq |M_{(1,w)}| \stackrel{\pref{Mdef}}{\geq} \\
                 &\geq \prod_{v\in V\Gamma \setminus\{w\}}|M_v| \stackrel{\pref{Mvdef}}{=} |L_{(\Omega\setminus B_1)}|^{|V\Gamma|-1} = \\
                 &=  |L_{(\Omega\setminus B_1)}|^{\frac{|V\Lambda|}{k}-1}.
            \end{split}
        \end{equation}
    \end{proof} 
    
\[\textbf{Phase VIII: Conclusion}\]
In Phase II, we started with a pair $(\Gamma,G)$   whose existence and properties were guaranteed by \cref{Gamma} and eventually defined the pair $(\Lambda,H)$ in terms of $(\Gamma,G)$. Note that \cref{Gamma} actually guarantees the existence of such pairs $(\Gamma,G)$ with $|V\Gamma|$ arbitrarily large. Constructing the pair $(\Lambda,H)$ corresponding to each such $(\Gamma,G)$ thus produces an infinite family of pairs $(\Lambda,H)$  for which \cref{AllAboutH} holds, with $|V\Lambda|$ arbitrarily large. In particular, these pairs are locally-$L$. Moreover we have $L_{(\Omega\setminus B)}\neq 1$ by assumption, which translates to $|L_{(\Omega\setminus B_1)}|\neq 1$ via \pref{EmbeddingOfL}. It follows from the bound in  \cref{AllAboutH} that the graph growth of $L$ is exponential, as desired.

\medskip

\begin{ack}
The first author is supported by the New Zealand Marsden Fund grant UOA-2122.
\end{ack}

\bibliography{NewRef}
\bibliographystyle{acm}

\end{document}